\newtheorem{theorem}{Theorem}[section]
\newtheorem{lemma}[theorem]{Lemma}
\newtheorem{proposition}[theorem]{Proposition}
\theoremstyle{definition}
\newtheorem{example}[theorem]{Example}
\newtheorem{cor}[theorem]{Corollary}
\theoremstyle{remark}
\newtheorem{remark}[theorem]{Remark}
\numberwithin{equation}{section}
\begin{document}

		\title[Min-max relations for tuples of operators in terms of component spaces] {{Min-max relations for tuples of operators in terms of component spaces}}
	
	
\author[Arpita Mal]{Arpita Mal}

\address{Dhirubhai Ambani University\\ Gandhinagar-382007\\ India.}
\email{arpitamalju@gmail.com}



\subjclass{Primary 47L05, Secondary 47A30, 46B28}
\keywords{Distance formula; best approximation;  Birkhoff-James orthogonality; tuple of operators; smoothness}



\date{}

\begin{abstract}
For tuples of compact operators $\mathcal{T}=(T_1,\ldots, T_d)$ and $\mathcal{S}=(S_1,$ $\ldots,S_d)$ on Banach spaces over a field $\mathbb{F}$, considering the joint $p$-operator norms on the tuples, we study $dist(\mathcal{T},\mathbb{F}^d\mathcal{S}),$ the distance of $\mathcal{T}$ from the $d$-dimensional subspace $\mathcal{F}^d\mathcal{S}:=\{\textbf{z}\mathcal{S}:\textbf{z}\in \mathbb{F}^d\}.$ We obtain a relation between $dist(\mathcal{T},\mathbb{F}^d\mathcal{S})$ and $dist(T_i,\mathbb{F}S_i),$ for $1\leq i\leq d.$ We prove that if $p=\infty,$ then $dist(\mathcal{T},\mathbb{F}^d\mathcal{S})=\underset{1\leq i\leq d}{\max}dist(T_i,\mathbb{F}S_i),$ and for $1\leq p<\infty,$ under a sufficient condition, $dist(\mathcal{T},\mathbb{F}^d\mathcal{S})^p=\underset{1\leq i\leq d}{\sum}dist(T_i,\mathbb{F}S_i)^p.$  As a consequence, we deduce the equivalence of Birkhoff-James orthogonality,  $\mathcal{T}\perp_B \mathbb{F}^d\mathcal{S} \Leftrightarrow  T_i\perp_B S_i,$ under a sufficient condition. Furthermore, we explore the relation of one sided G\^ateaux derivatives of $\mathcal{T}$ in the direction of $\mathcal{S}$
with that of $T_i$ in the direction of $S_i.$ Applying this, we explore the relation between the smoothness of $\mathcal{T}$ and $T_i.$  By identifying an operator, whose range is $\ell_\infty^d,$ as a tuple of functionals, we effectively use the results obtained here for operators whose range is $\ell_\infty^d$ and deduce nice results involving functionals. 
\end{abstract}

\maketitle

\section{Introduction}
The purpose of this article is to explore the relation between the distance of tuples of operators and the distance of its components. Let us first introduce the notations and terminologies that will be used throughout the article.\\
The symbols $\mathcal{X}, \mathcal{Y}, \mathcal{Y}_i~(1\leq i\leq d)$ denote Banach spaces and $\mathcal{H}$ denotes a Hilbert space over the field $\mathbb{F}$, where $\mathbb{F}=\mathbb{R}$ or $\mathbb{C}.$  
Let $B_{\mathcal{X}},$ $S_\mathcal{X}$ denote the closed unit ball and the unit sphere of $\mathcal{X},$ respectively. 
For $x\in \mathcal{X}$ and a subspace $W$ of $\mathcal{X},$ the distance of $x$ from $W$ is defined as $$dist(x,W):=\inf\{\|x-w\|:w\in W\}.$$ 
An element $w_0\in W$ is said to be a best approximation to $x$ out of $W,$ if $\|x-w_0\|=dist(x,W).$ Let 
$$P_W(x):=\{w_0\in W:\|x-w_0\|=dist(x,W)\}.$$ 
In general, $P_W(x)$ may be empty. However, if either $W$ is finite-dimensional or $\mathcal{X}$ is reflexive and $W$ is closed, then $P_W(x)\neq \emptyset$ for all $x\in \mathcal{X}.$ Birkhoff-James (B-J) orthogonality and best approximation are equivalent notions. For $x,y\in \mathcal{X},$ $x$ is said to be B-J orthogonal \cite{B,J} to $y,$ denoted as $x\perp_B y,$ if 
$$\|x+\lambda y\|\geq \|x\|,\quad \text{ for all scalars } \lambda.$$ 
  We say that $x\perp_B W$ if $x\perp_B w$ for all $w\in W.$ Observe that $w_0\in P_W(x)$ if and only if $x-w_0\perp_B W.$ Let $\mathcal{X}^*$ denote the dual space of $\mathcal{X}.$ 
 We denote  the set of all extreme points of a convex set $C$ by $E_C.$ For simplicity, we denote  $E_{B_{\mathcal{X}}}$ by $E_{\mathcal{X}}.$ 
For $(0\neq)x\in \mathcal{X},$ let $J(x):=\{f\in S_{\mathcal{X}^*}:f(x)=\|x\|\}.$ Note that, $J(x)$ is a non-empty, convex, weak*compact, extremal subset of $B_{\mathcal{X}^*}.$ If $J(x)$ is singleton, then $x$ is said to be smooth in $\mathcal{X}.$ It is well-known that $x$ is smooth if and only if the norm of $\mathcal{X}$ is G\^ateaux differentiable at $x.$ Recall that 
\[\rho_+(x,y)=\lim_{t\to0+}\frac{\|x+ty\|-\|x\|}{t}, ~\text{ and } \rho_-(x,y)=\lim_{t\to0-}\frac{\|x+ty\|-\|x\|}{t} \]
are called, respectively, the right-hand and left-hand G\^ateaux derivative of $x$ in the direction of $y.$ Note that in general, $\rho_-(x,y)\leq \rho+(x,y).$ If $\rho_+(x,y)=\rho_-(x,y)$ holds for all $y,$ then the norm is said to be G\^ateaux differentiable at $x.$ It is well-known from \cite{DGZ,HL} that
\begin{align*}
	\rho_+(x,y)= \sup\{f(y):f\in E_{J(x)}\}, \text{ and }	\rho_-(x,y)= \inf\{f(y):f\in E_{J(x)}\}.
	\end{align*}
See \cite{GI} for more results on the mapping $P_W$ and these one sided limits. The readers may follow \cite{CL} for more results on approximation theory.
We reserve the notation $\ell_p^d(\mathcal{Y}_k)$ to denote $\underset{1\leq k\leq d}{\oplus}\mathcal{Y}_k,$  the direct sum of $\mathcal{Y}_k,$ where  for $y_k\in \mathcal{Y}_k,$
\[\|(y_1,y_2,\ldots,y_d)\|_p=\begin{cases}
	\bigg(\displaystyle{\sum_{i=1}^d}\|y_i\|^p\bigg)^\frac{1}{p},~\text{ if }1\leq p<\infty\\
	\displaystyle{\max_{1\leq i\leq d}}\|y_i\|, \text{ if } p=\infty
\end{cases}.\]
Note that $(\ell_p^d(\mathcal{Y}_k))^*=\ell_q^d(\mathcal{Y}_k^*),$ where as usual for  $1< p<\infty,$ $\frac{1}{p}+\frac{1}{q}=1,$ for $p=1,$  $q=\infty,$ and for $p=\infty,$ $q=1.$ 
Let $\mathcal{L}(\mathcal{X,\mathcal{Y}})$ (resp., $\mathcal{K}(\mathcal{X,\mathcal{Y}})$) be the space of all bounded (resp., compact) linear operators from $\mathcal{X}$ to $\mathcal{Y}$. For $T_k\in \mathcal{L}(\mathcal{X,\mathcal{Y}}_k),$ suppose $\mathcal{T}$ denotes the $d$-tuple $(T_1,T_2,\ldots,T_d).$ Then $\mathcal{T}:\mathcal{X}\to\ell_p^d(\mathcal{Y}_k)$ defined as $\mathcal{T}x=(T_1x,T_2x,\ldots,T_dx)$ is a bounded linear operator. Moreover, $\mathcal{T}$ is compact, if each $T_k$ is compact. Consider the usual operator norm of $\mathcal{T}\in \mathcal{L}(\mathcal{X},\ell_p^d(\mathcal{Y}_k)).$  Observe that $\mathcal{T}^*:\ell_q^d(\mathcal{Y}_k^*)\to \mathcal{X}^*,$ where  for all $f_k\in \mathcal{Y}_k^*,$ $\mathcal{T}^*(f_1,\ldots,f_d)=\sum_{k=1}^dT_k^*f_k.$
Distance formulae and B-J orthogonality of operators have been extensively studied  in the literature (see \cite{BS,C,G,GS3,M,MP22,MP,RAO21,Sb,SMP,SPM2} for some references). Interested readers may follow the recent book chapter \cite{AGKZ} and the monograph \cite{MPS}  for more information in this topic.
 Recently, in \cite{GS, GS2} Grover and Singla studied B-J orthogonality for tuples of operators defined on Hilbert spaces for the special case $p=2.$  Suppose $\mathcal{S}=(S_1,\ldots,S_d), $ where $S_k\in \mathcal{L}(\mathcal{X},\mathcal{Y}_k)$ and for $\textbf{z}=(z_1,\ldots,z_d)\in \mathbb{F}^d,$ $\textbf{z}\mathcal{S}=(z_1S_1,\ldots,z_dS_d).$ Throughout the article, $\mathcal{T}^0$ denotes the tuple $\mathcal{T}-\textbf{z}^0\mathcal{S},$ where $\textbf{z}^0=(z_1^0,\ldots,z_d^0)\in \mathbb{F}^d$ and $T_j^0=T_j-z_j^0S_j.$ Motivated by \cite{GS,GS2}, we study $dist(\mathcal{T},\mathbb{F}^d\mathcal{S}).$ The following questions arise naturally.
\begin{enumerate}
\item 	Is there any relation between $dist(\mathcal{T},\mathbb{F}^d\mathcal{S})$ and $dist(T_i,\mathbb{F}S_i),$ for some $1\leq i\leq d?$

\item How are the notions $\mathcal{T}\perp_B \mathcal{S}$ and $T_i\perp_B S_i$ related?

\item How are the one sided G\^ateaux derivatives of tuples of operators related to that of its components? 

\item How are the smoothness of $\mathcal{T}$ and the smoothness of $T_i$  related?
\end{enumerate}
In this article, we address these questions and explore the relations.
 For an operator $T\in \mathcal{L}(\mathcal{X},\mathcal{Y}),$ we denote the norm attainment set of $T$ by $M_T:=\{x\in S_{\mathcal{X}}:\|Tx\|=\|T\|\}.$ It is well-known that a compact operator $T$ on a reflexive Banach space is smooth if and only if there exists a unit vector $x$ such that $M_T=\{\alpha x:\alpha\in \mathbb{F}, |\alpha|=1\}$ and $Tx$ is smooth (see \cite{MPS}). Observe that for each $T\in \mathcal{K}(\mathcal{X},\ell_p^d),$ there exist functionals $f_1,\ldots,f_d\in \mathcal{X}^*$ such that $Tx=(f_1(x),\ldots,f_d(x))$ for all $x\in \mathcal{X}.$ Clearly, $T$ can be considered as a tuple of functionals on $\mathcal{X}.$ Applying the results for tuples of operators, we get some interesting distance formula and equivalence of B-J orthogonality of operators in $\mathcal{K}(\mathcal{X},\ell_\infty^d),$ which also illustrate the applicability of the results in this article.\\
To prove the desired results, we frequently use the extreme points  of $B_{\mathcal{K}(\mathcal{X},\mathcal{Y})^*}.$ From \cite[Th. 1.3]{RS}, we note that 
\begin{equation}\label{eq-rsg}
	E_{\mathcal{K}(\mathcal{X},\mathcal{Y})^*}=\{x^{**}\otimes y^*:x^{**}\in E_{\mathcal{X}^{**}},y^*\in E_{\mathcal{Y}^*}\}, 
\end{equation}
where $x^{**}\otimes y^*(S)=x^{**}(S^*y^*)$ for $S\in \mathcal{K}(\mathcal{X},\mathcal{Y}).$ In particular, if $\mathcal{X}$ is reflexive, then
\begin{equation}\label{eq-rs}
	E_{\mathcal{K}(\mathcal{X},\mathcal{Y})^*}=\{y^{*}\otimes x:x\in E_{\mathcal{X}},y^*\in E_{\mathcal{Y}^*}\}, 
\end{equation}
where $y^{*}\otimes x(S)=y^{*}(Sx)$ for $S\in \mathcal{K}(\mathcal{X},\mathcal{Y}).$ The other main tool of this article is 	the following fundamental characterization of best approximation due to Singer.
\begin{theorem}\cite[Th. 1.1, pp. 170]{S}\label{th-singer}
	Let $x\in \mathcal{X},$ and $W$ be a subspace of $\mathcal{X}$ such that $\dim(W)=n$ and $x\notin W.$ Then $y\in P_{W}(x)$ if and only if there exist
	$h$ extreme points $f_1,f_2,\ldots,f_h\in E_{\mathcal{X}^*},$ ($h\leq n+1$ if $\mathbb{F}=\mathbb{R}$ and $h\leq 2n+1$ if $\mathbb{F}=\mathbb{C}$), $h$ numbers $t_1,t_2,\ldots,t_h>0$ such that $\sum_{i=1}^h t_i=1,$ $\sum_{i=1}^h t_if_i(w)=0$ for all $w\in W$ and $f_i(x-y)=\|x-y\|$ for all $1\leq i\leq h.$
\end{theorem}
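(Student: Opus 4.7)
The plan is to prove both implications separately. For the easy direction ($\Leftarrow$), given the data $f_1,\ldots,f_h,t_1,\ldots,t_h$, fix $w \in W$ and use that $w - y \in W$ (because $y \in W$): then $\sum_i t_i f_i(x-w) = \sum_i t_i f_i(x-y) + \sum_i t_i f_i(y-w) = \|x-y\|$, and the chain $\|x-y\| = \bigl|\sum_i t_i f_i(x-w)\bigr| \leq \sum_i t_i |f_i(x-w)| \leq \|x-w\|$ yields $y \in P_W(x)$.

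For the harder direction ($\Rightarrow$), assume $y \in P_W(x)$. First I would produce a single norming functional: define $g$ on $\mathrm{span}(x-y) + W$ by $g(\alpha(x-y) + w) = \alpha\|x-y\|$, which is well-defined since $x \notin W$ forces $x - y \notin W$; the best-approximation inequality $\|x-y\| \leq \|x - y - w'\|$ for $w' \in W$ shows $\|g\| = 1$, and Hahn-Banach extends $g$ to $g \in S_{\mathcal{X}^*}$ with $g(x-y) = \|x-y\|$ and $g|_W = 0$. Next, consider the support face $F := \{f \in B_{\mathcal{X}^*} : f(x-y) = \|x-y\|\}$, which is weak*-compact and convex; being the intersection of $B_{\mathcal{X}^*}$ with a supporting affine hyperplane, $F$ is a face of $B_{\mathcal{X}^*}$, so $E_F \subseteq E_{\mathcal{X}^*}$.

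The next move reduces to finite dimension via the weak*-continuous affine restriction map $\Phi : F \to W^*$, $f \mapsto f|_W$. Since $\Phi(F)$ is compact convex in $W^*$, which has real dimension $n$ or $2n$ according to $\mathbb{F}$, and $0 = \Phi(g) \in \Phi(F)$, Minkowski-Carath\'eodory yields a representation $0 = \sum_{i=1}^h t_i \phi_i$ with $\phi_i \in E_{\Phi(F)}$, $t_i > 0$, $\sum t_i = 1$, and $h \leq n+1$ (real) or $h \leq 2n+1$ (complex). To lift each $\phi_i$, consider the fibre $\Phi^{-1}(\phi_i) \cap F$: extremality of $\phi_i$ in $\Phi(F)$ together with affineness of $\Phi$ forces this fibre to be a face of $F$, so by Krein-Milman it contains an extreme point $f_i$, which, being extreme in a face of $F$, lies in $E_F \subseteq E_{\mathcal{X}^*}$. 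By construction $f_i(x-y) = \|x-y\|$ and $\sum_i t_i f_i|_W = \sum_i t_i \phi_i = 0$, completing the necessity.

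The main obstacle is the extreme-point lifting: one must verify that the fibre over an extreme point $\phi_i$ is genuinely a face of $F$ (rather than merely a convex subset) in order to transport extremality back up the tower $\Phi^{-1}(\phi_i) \cap F \subseteq F \subseteq B_{\mathcal{X}^*}$. The sharper bound $2n+1$ in the complex setting is a second subtle point, reflecting the doubling of real dimension when $\mathbb{F} = \mathbb{C}$, and it is the reason the usual Carath\'eodory bound of $n+1$ must be relaxed.
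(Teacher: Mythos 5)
This statement is quoted in the paper from Singer's monograph without proof, so there is no in-paper argument to compare against; judged on its own, your proof is correct and follows what is essentially the classical route: Hahn--Banach to get one norming functional annihilating $W$, the weak*-compact face $F=\{f\in B_{\mathcal{X}^*}:f(x-y)=\|x-y\|\}$, the affine weak*-continuous restriction map into the (real) $n$- or $2n$-dimensional space $W^*$, Minkowski--Carath\'eodory there, and lifting via the fact that the fibre over an extreme point of $\Phi(F)$ is a face of $F$. The two points you flag as delicate are handled correctly: the fibre is indeed a face because extremality of $\phi_i$ in $\Phi(F)$ forces any $F$-decomposition of a fibre element to stay in the fibre (so transitivity of faces gives $f_i\in E_{\mathcal{X}^*}$), and the bound $h\le 2n+1$ in the complex case is exactly the real-dimension count of $W^*$; the easy direction is also complete since $\|f_i\|\le 1$ gives $\|x-y\|=\sum_i t_if_i(x-w)\le\|x-w\|$ for every $w\in W$.
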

In Section 2,  we prove that 
\[dist(\mathcal{T},\mathbb{F}^d\mathcal{S})=\begin{cases}
	\underset{1\leq i\leq d}{\max}dist(T_i,\mathbb{F}S_i), \quad \text{if } p=\infty\\
	\bigg(\underset{1\leq i\leq d}\sum dist(T_ix,\mathbb{F}S_ix)^p\bigg)^\frac{1}{p}, \text{ for some } x\in E_{\mathcal{X}}, \text{ if } 1\leq p <\infty,
	\end{cases}\]
where in the latter case, we assume smoothness of a tuple.
We obtain a sufficient condition to get $dist(\mathcal{T},\mathbb{F}^d\mathcal{S})^p=\underset{1\leq i\leq d}\sum dist(T_i,\mathbb{F}S_i)^p,$ for $1\leq p<\infty.$ 
 As a consequence, we deduce that under a sufficient condition,  if $p=\infty,$ then  $\mathcal{T}\perp_B \mathbb{F}^d\mathcal{S} \Leftrightarrow  T_i\perp_B S_i$  for some  $1\leq i\leq d,$ whereas   if  $1\leq p<\infty,$ then  $\mathcal{T}\perp_B \mathbb{F}^d\mathcal{S} \Leftrightarrow  T_i\perp_B S_i$  for all  $1\leq i\leq d.$ We effectively use these results on $\mathcal{K}(\mathcal{X},\ell_\infty^d)$ and prove that if $T,S\in \mathcal{K}(\mathcal{X},\ell_\infty^d),$ then  $dist(T,\mathbb{F}^dS)=\underset{1\leq i\leq d}{\max}\|f_i|_{\ker(g_i)}\|,$ where $T=(f_1,\ldots,f_d), ~S=(g_1,\ldots,g_d)$ for $f_i,g_i\in \mathcal{X}^*.$  In the direction of one sided G\^ateaux derivatives, we first answer the third question raised earlier and then using it we explore sufficient and necessary conditions for smoothness of $\mathcal{T}$ in terms of smoothness of $T_i$. 

\section{Main results}

We begin the with the  case $1\leq p<\infty.$ Note that, if  $\mathcal{T},\mathcal{S}\in \mathcal{K}(\mathcal{X},\ell_{p}^d(\mathcal{Y})),$ then $\sum_{j=1}^d\|T_jx\|^p\leq \|\mathcal{T}x\|^p$ for each  $x\in S_{\mathcal{X}},$ which implies that $\sum_{j=1}^d\|T_jx\|^p\leq \|\mathcal{T}\|^p.$ Thus, $$\underset{1\leq j\leq d}{\sum}dist(T_jx,\mathbb{F}S_jx)^p\leq dist(\mathcal{T},\mathbb{F}^d\mathcal{S})^p,$$ for each $x\in S_{\mathcal{X}}.$ In the next proposition, we provide a sufficient condition for the equality here for some unit vector $x.$

\begin{proposition}\label{prop-04}
	Suppose $\mathcal{X}$ is reflexive. Let $\mathcal{T},\mathcal{S}\in \mathcal{K}(\mathcal{X},\ell_p^d(\mathcal{Y}_k)),$ where $\mathcal{T}\notin \mathbb{F}^d\mathcal{S}.$ Suppose $dist(\mathcal{T},\mathbb{F}^d\mathcal{S})=\|\mathcal{T}^0\|,$ and $\mathcal{T}^0$ is smooth. Then there exists $x\in M_{\mathcal{T}^0}\cap E_{\mathcal{X}}$ such that
	\[dist(T_jx,\mathbb{F}S_jx)=\|T_j^0x\| ~\text{ for all }1\leq j\leq d,\]
	and 
\begin{equation}\label{eq-012}
	dist(\mathcal{T},\mathbb{F}^d\mathcal{S})^p=\sum_{j=1}^ddist(T_jx,\mathbb{F}S_jx)^p.
\end{equation}
\end{proposition}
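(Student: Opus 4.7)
The plan is to apply Singer's theorem to the best approximation datum $\mathbf{z}^0\mathcal{S}\in P_{\mathbb{F}^d\mathcal{S}}(\mathcal{T})$, use the smoothness of $\mathcal{T}^0$ to collapse the resulting family of support functionals into a single one, unpack that functional via the extreme-point description (\ref{eq-rs}), and then transfer the global orthogonality condition coordinate by coordinate.

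First, I would apply Theorem \ref{th-singer} with $W=\mathbb{F}^d\mathcal{S}$ (of dimension at most $d$) to obtain extreme points $F_1,\ldots,F_h\in E_{\mathcal{K}(\mathcal{X},\ell_p^d(\mathcal{Y}_k))^*}$ and positive weights $t_i$ summing to one with $F_i(\mathcal{T}^0)=\|\mathcal{T}^0\|$ for every $i$ and $\sum_i t_iF_i\equiv 0$ on $\mathbb{F}^d\mathcal{S}$. Each $F_i$ then lies in $J(\mathcal{T}^0)$, which by the smoothness hypothesis is a singleton, so all $F_i$ equal a single functional $F$ that on its own annihilates $\mathbb{F}^d\mathcal{S}$. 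Reflexivity of $\mathcal{X}$ and the description (\ref{eq-rs}) give $F=\phi\otimes x$ with $x\in E_{\mathcal{X}}$ and $\phi=(\phi_1,\ldots,\phi_d)\in E_{\ell_q^d(\mathcal{Y}_k^*)}$. The identity $\phi(\mathcal{T}^0 x)=F(\mathcal{T}^0)=\|\mathcal{T}^0\|$ together with $\|\phi\|_q=\|x\|=1$ forces $\|\mathcal{T}^0 x\|_p=\|\mathcal{T}^0\|$, so $x\in M_{\mathcal{T}^0}\cap E_{\mathcal{X}}$ as required, and simultaneously $\phi\in J(\mathcal{T}^0 x)$. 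The annihilation condition $F(\mathbf{z}\mathcal{S})=\sum_j z_j\phi_j(S_jx)=0$ for every $\mathbf{z}\in\mathbb{F}^d$ then gives $\phi_j(S_jx)=0$ for each $j$.

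Next, I would dissect the H\"older-type equality $\sum_j\phi_j(T_j^0 x)=\|\mathcal{T}^0 x\|_p$ coordinatewise. For $1<p<\infty$, the equality case of the $\ell_q$-$\ell_p$ H\"older inequality forces $\|\phi_j\|^q$ to be proportional to $\|T_j^0 x\|^p$ (in particular $\phi_j=0\Leftrightarrow T_j^0 x=0$) and $\phi_j(T_j^0 x)=\|\phi_j\|\,\|T_j^0 x\|$; for $p=1$ it forces $\|\phi_j\|=1$ and $\phi_j(T_j^0 x)=\|T_j^0 x\|$ whenever $T_j^0 x\neq 0$. In either case $\phi_j/\|\phi_j\|\in J(T_j^0 x)$ whenever $T_j^0 x\neq 0$; combined with $\phi_j(S_jx)=0$, this yields $\|T_j^0 x+\lambda S_jx\|\geq\|T_j^0 x\|$ for every scalar $\lambda$, so $T_j^0 x\perp_B S_jx$ and $dist(T_jx,\mathbb{F}S_jx)=\|T_j^0 x\|$. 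The degenerate case $T_j^0 x=0$ means $T_jx=z_j^0S_jx\in\mathbb{F}S_jx$, for which the identity is trivial. Summing the $p$-th powers,
$$dist(\mathcal{T},\mathbb{F}^d\mathcal{S})^p=\|\mathcal{T}^0\|^p=\|\mathcal{T}^0 x\|_p^p=\sum_{j=1}^d\|T_j^0 x\|^p=\sum_{j=1}^d dist(T_jx,\mathbb{F}S_jx)^p,$$
which is exactly (\ref{eq-012}).

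The main obstacle is the coordinatewise dissection in the final step: one must carefully characterise the equality case in the vector-valued $\ell_q$-$\ell_p$ duality in order to split the single global supporting functional $\phi$ into individual supporting functionals $\phi_j$ of $T_j^0 x$. The cases $1<p<\infty$ (H\"older proportionality) and $p=1$ ($\ell_\infty$ saturation at every non-trivial coordinate) require slightly different treatment, but both funnel into the same conclusion $\phi_j/\|\phi_j\|\in J(T_j^0 x)$ that transports the B-J orthogonality from the tuple to each component. Once this is in place, the collapse via smoothness and the extreme-point representation make everything line up.
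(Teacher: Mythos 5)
Your proof is correct, but it takes a genuinely different route from the paper's. The paper argues via norm attainment: smoothness of $\mathcal{T}^0$ (compact, $\mathcal{X}$ reflexive) gives $M_{\mathcal{T}^0}=\{\alpha x:|\alpha|=1\}$ with $x\in E_{\mathcal{X}}$, and a cited localization result (\cite[Ch.~6]{MPS}) transfers $\mathcal{T}^0\perp_B\mathbb{F}^d\mathcal{S}$ to $\mathcal{T}^0x\perp_B\mathbb{F}^d\mathcal{S}x$; the coordinatewise conclusion $T_j^0x\perp_B S_jx$ then falls out immediately from the $\ell_p$-sum structure by testing with tuples $\textbf{z}$ supported in a single coordinate, with no duality argument at all. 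You instead run Singer's theorem together with the extreme-point description (\ref{eq-rs}), use smoothness only in its definitional form ($J(\mathcal{T}^0)$ a singleton) to collapse the convex combination of support functionals to one functional $\phi\otimes x$, and then dissect the H\"older equality to extract $\phi_j/\|\phi_j\|\in J(T_j^0x)$ and $\phi_j(S_jx)=0$; this is essentially the machinery the paper reserves for Theorem \ref{th-02}, transplanted here with smoothness replacing the hypothesis $\cap_j M_{T_j^0}\neq\emptyset$. Your route is self-contained (it avoids the citation to the localization result and does not need the norm-attainment characterization of smooth compact operators) and produces the supporting functionals explicitly, at the cost of the equality-case analysis in H\"older's inequality and the separate treatment of $p=1$ versus $1<p<\infty$; the paper's route is shorter and more elementary once the cited result is granted. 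All the steps you give check out, including the degenerate case $T_j^0x=0$ and the verification that $x\in M_{\mathcal{T}^0}\cap E_{\mathcal{X}}$.
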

\begin{proof}
	 Note that $dist(\mathcal{T},\mathbb{F}^d\mathcal{S})=\|\mathcal{T}^0\|$ implies that $\mathcal{T}^0\perp_B \mathbb{F}^d\mathcal{S}.$ Since $\mathcal{T}^0$ is smooth, there exists a unit vector $x$  such that $M_{\mathcal{T}^0}=\{\alpha x:\alpha\in \mathbb{F},|\alpha|=1\}.$ It is easy to check that $x\in E_{\mathcal{X}}.$ Moreover, from \cite[Ch. 6]{MPS} it follows that  $\mathcal{T}^0x\perp_B \mathbb{F}^d\mathcal{S}x.$ Hence, 
	 \begin{align*}
	& \|(T_1^0x,\ldots,T_d^0x)+(z_1S_1x,\ldots,z_dS_jx)\|^p\geq\|\mathcal{T}^0x\|^p, ~\text{ for all } (z_1,\ldots,z_d)\in \mathbb{F}^d\\
	 	\Rightarrow& \sum_{j=1}^d\|T_j^0x+z_jS_jx\|^p\geq \sum_{j=1}^d\|T_j^0x\|^p , ~\text{ for all } (z_1,\ldots,z_d)\in \mathbb{F}^d\\
	 	\Rightarrow& \|T_j^0x+zS_jx\|^p\geq \|T_j^0x\|, ~\text{ for all } z\in \mathbb{F}, ~1\leq j\leq d\\
	 	\Rightarrow& T_j^0x\perp_B S_jx,  ~\text{ for all }1\leq j\leq d\\
	 	\Rightarrow& dist(T_jx,\mathbb{F}S_jx)=\|T_j^0x\| ~\text{ for all }1\leq j\leq d.
	 	\end{align*}
	 	Therefore, 
	\[dist(\mathcal{T},\mathbb{F}^d\mathcal{S})^p=\|\mathcal{T}^0\|^p=\|\mathcal{T}^0x\|^p=\sum_{j=1}^d\|T_j^0x\|^p=\sum_{j=1}^ddist(T_jx,\mathbb{F}S_jx)^p.\]
\end{proof}

Observe that for each unit vector $x,$ $dist(T_jx,\mathbb{F}S_jx)\leq dist(T_j,\mathbb{F}S_j).$ Therefore,
from (\ref{eq-012}) it clearly follows that if $\mathcal{T}^0$ is smooth, then
\begin{equation}\label{eq-013}
	dist(\mathcal{T},\mathbb{F}^d\mathcal{S})^p\leq \sum_{j=1}^ddist(T_j,\mathbb{F}S_j)^p.
\end{equation}
The natural question that arises now is for the equality condition in (\ref{eq-013}). We address this question now. We  use the following lemma to answer the question. The proof of the lemma is trivial. For the sake of completeness, we provide the proof here.

\begin{lemma}\label{lem-02}
	Suppose $\mathcal{T}\in \mathcal{K}(\mathcal{X},\ell_{p}^d(\mathcal{Y}_k)).$  Let $\cap_{i=1}^dM_{T_i}\neq \emptyset.$ Then $$M_{\mathcal{T}}=\cap_{i=1}^dM_{T_i} \quad \text{ and }\quad \|\mathcal{T}\|^p=\sum_{i=1}^d\|T_i\|^p.$$
\end{lemma}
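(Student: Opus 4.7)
The plan is built on the defining identity $\|\mathcal{T}x\|^p=\sum_{i=1}^d\|T_ix\|^p$ for every $x\in\mathcal{X}$, which is just the definition of the $\ell_p^d(\mathcal{Y}_k)$-norm (we take $1\leq p<\infty$, since the conclusion is phrased as a $p$-th power sum). Applying the term-wise bound $\|T_ix\|\leq\|T_i\|$ in each summand gives the easy inequality
\[\|\mathcal{T}\|^p=\sup_{x\in S_\mathcal{X}}\sum_{i=1}^d\|T_ix\|^p\leq \sum_{i=1}^d\|T_i\|^p.\]

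For the reverse direction I would invoke the hypothesis directly: fix any $x_0\in\cap_{i=1}^d M_{T_i}$, so that $x_0\in S_\mathcal{X}$ and $\|T_ix_0\|=\|T_i\|$ for every $i$. Then $\|\mathcal{T}x_0\|^p=\sum_{i=1}^d\|T_i\|^p$, and since $x_0$ is a unit vector this forces $\|\mathcal{T}\|^p\geq \sum_{i=1}^d\|T_i\|^p$. Combining the two inequalities yields the norm identity.

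The set equality then drops out essentially for free. The inclusion $\cap_{i=1}^d M_{T_i}\subseteq M_{\mathcal{T}}$ is the same computation as in the previous step. For the reverse inclusion, if $x\in M_{\mathcal{T}}$, then
\[\sum_{i=1}^d\|T_ix\|^p=\|\mathcal{T}x\|^p=\|\mathcal{T}\|^p=\sum_{i=1}^d\|T_i\|^p,\]
and because each summand satisfies $\|T_ix\|^p\leq\|T_i\|^p$, saturation of the sum forces saturation of each term, i.e., $\|T_ix\|=\|T_i\|$ for all $i$, hence $x\in\cap_{i=1}^d M_{T_i}$.

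There is no real obstacle here; the only point requiring a sentence is that equality in a sum of pointwise-dominated non-negative terms forces equality term by term, which is immediate from the strict monotonicity of $t\mapsto t^p$ on $[0,\infty)$. The hypothesis $\cap_{i=1}^d M_{T_i}\neq\emptyset$ is used only once, namely to produce the common norm-attaining unit vector $x_0$ that realizes the lower bound.
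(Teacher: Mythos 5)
Your proposal is correct and follows essentially the same argument as the paper: the pointwise bound $\|\mathcal{T}z\|^p=\sum_{i}\|T_iz\|^p\leq\sum_i\|T_i\|^p$, evaluation at a common norm-attaining vector to get the reverse inequality and the norm identity, and term-by-term saturation to obtain $M_{\mathcal{T}}=\cap_{i=1}^d M_{T_i}$. The only difference is cosmetic ordering (norm identity first versus first inclusion first), so there is nothing to add.
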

\begin{proof}
We first prove that $\cap_{i=1}^dM_{T_i}\subseteq M_{\mathcal{T}}.$	Let $x\in \cap_{i=1}^dM_{T_i},$ that is, $\|T_ix\|=\|T_i\|$ for all $1\leq i\leq d.$ So
	\begin{equation}\label{eq-01}
		\|\mathcal{T}\|^p\geq \|\mathcal{T}x\|^p=\sum_{i=1}^d\|T_ix\|^p=\sum_{i=1}^d\|T_i\|^p.
		\end{equation}
	On the other hand, since for each $z\in S_{\mathcal{X}},$
		\[\|\mathcal{T}z\|^p=\sum_{i=1}^d\|T_iz\|^p\leq \sum_{i=1}^d\|T_i\|^p,\] and so \[\|\mathcal{T}\|^p=\sup_{z\in S_{\mathcal{X}}}\|\mathcal{T}z\|^p\leq  \sum_{i=1}^d\|T_i\|^p. \]
		Therefore, the first inequality in (\ref{eq-01}) is equality and we have $\|\mathcal{T}\|=\|\mathcal{T}x\|,$ that is $x\in M_{\mathcal{T}}.$  \\
		Now to prove $M_{\mathcal{T}}\subseteq \cap_{i=1}^dM_{T_i},$ choose $x\in M_{\mathcal{T}}.$ Then 
		\[\|\mathcal{T}\|^p=\|\mathcal{T}x\|^p=\sum_{i=1}^d\|T_ix\|^p\leq \sum_{i=1}^d\|T_i\|^p=\|\mathcal{T}\|^p,\] by the equality of (\ref{eq-01}). Therefore, $\|T_ix\|=\|T_i\|$ for each $1\leq i\leq d,$ that is, $x\in \cap_{i=1}^dM_{T_i}.$
\end{proof}

In the next theorem, we provide a sufficient condition for the equality in (\ref{eq-013}). Later on we provide examples of tuples satisfying the sufficient condition. We prove the theorem assuming $\mathcal{Y}_k=\mathcal{Y}$ for all $1\leq k\leq d.$ The same proof holds in the general case with some minor modification in notation. However, to avoid complexity of notation, we are considering this simpler case in the following theorem as well as in Theorem \ref{th-07}, Corollary \ref{cor-03} and Corollary \ref{cor-01}.
\begin{theorem}\label{th-02}
	Suppose $\mathcal{X}$ is reflexive. Let $\mathcal{T},\mathcal{S}\in \mathcal{K}(\mathcal{X},\ell_p^d(\mathcal{Y})),$  where $\mathcal{T}\notin \mathbb{F}^d\mathcal{S},$ and $dist(\mathcal{T},\mathbb{F}^d\mathcal{S})=\|\mathcal{T}^0\|.$  Suppose $\cap_{j=1}^dM_{T_j^0}\neq \emptyset.$ Then 
\begin{equation}\label{eq-02}
dist(T_j,\mathbb{F}S_j)=\|T_j^0\|~\text{ for all } 1\leq j\leq d~\text{ and}
\end{equation}
	\begin{equation}\label{eq-07}
	dist(\mathcal{T},\mathbb{F}^d\mathcal{S})^p=\sum_{j=1}^ddist(T_j,\mathbb{F}S_j)^p.
	\end{equation}
\end{theorem}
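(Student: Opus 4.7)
The plan is to sandwich the quantity $\sum_{j=1}^d dist(T_j,\mathbb{F}S_j)^p$ between two expressions that are a priori both equal to $dist(\mathcal{T},\mathbb{F}^d\mathcal{S})^p$, forcing every intermediate inequality to collapse to an equality. This avoids any appeal to smoothness (which is needed in Proposition \ref{prop-04}) and simultaneously yields both conclusions (\ref{eq-02}) and (\ref{eq-07}).

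First I would apply Lemma \ref{lem-02} to the tuple $\mathcal{T}^0$. Since $\cap_{j=1}^d M_{T_j^0}\neq\emptyset$ by hypothesis, the lemma gives $\|\mathcal{T}^0\|^p=\sum_{j=1}^d\|T_j^0\|^p$, and combined with $dist(\mathcal{T},\mathbb{F}^d\mathcal{S})=\|\mathcal{T}^0\|$ this yields the identity $dist(\mathcal{T},\mathbb{F}^d\mathcal{S})^p=\sum_{j=1}^d\|T_j^0\|^p$. Next I would establish the unconditional upper bound $dist(\mathcal{T},\mathbb{F}^d\mathcal{S})^p\leq \sum_{j=1}^d dist(T_j,\mathbb{F}S_j)^p$. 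To do so, choose for each $j$ a scalar $z_j\in\mathbb{F}$ attaining $\|T_j-z_jS_j\|=dist(T_j,\mathbb{F}S_j)$ (which exists since $\mathbb{F}S_j$ is one-dimensional), set $\textbf{z}=(z_1,\ldots,z_d)$, and write
\[dist(\mathcal{T},\mathbb{F}^d\mathcal{S})^p\leq \|\mathcal{T}-\textbf{z}\mathcal{S}\|^p=\sup_{y\in S_{\mathcal{X}}}\sum_{j=1}^d\|T_jy-z_jS_jy\|^p\leq \sum_{j=1}^d\|T_j-z_jS_j\|^p=\sum_{j=1}^d dist(T_j,\mathbb{F}S_j)^p,\]
using only the elementary inequality $\sup\sum\leq\sum\sup$. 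Combining with the trivial bound $dist(T_j,\mathbb{F}S_j)\leq\|T_j-z_j^0S_j\|=\|T_j^0\|$, I obtain the chain
\[dist(\mathcal{T},\mathbb{F}^d\mathcal{S})^p\leq \sum_{j=1}^d dist(T_j,\mathbb{F}S_j)^p\leq \sum_{j=1}^d\|T_j^0\|^p=dist(\mathcal{T},\mathbb{F}^d\mathcal{S})^p,\]
in which all the inequalities must be equalities. Equality in the second step, together with $dist(T_j,\mathbb{F}S_j)\leq\|T_j^0\|$ for every $j$, forces $dist(T_j,\mathbb{F}S_j)=\|T_j^0\|$ term by term, giving (\ref{eq-02}); equation (\ref{eq-07}) is then read off from either endpoint of the chain.

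The apparent obstacle, and the one that would waste time if pursued, is the temptation to deduce $T_j^0\perp_B S_j$ directly from $\mathcal{T}^0\perp_B \mathbb{F}^d\mathcal{S}$ by plugging in a vector $\textbf{z}$ supported on a single coordinate: this fails because the operator norm $\|(T_1^0,\ldots,T_j^0+zS_j,\ldots,T_d^0)\|$ is a supremum over $y\in S_{\mathcal{X}}$ and the extremizing vector need not lie in the common norm-attainment set $\cap_i M_{T_i^0}$. The sandwich argument above bypasses this difficulty entirely by working exclusively with scalar norms, using the hypothesis $\cap_j M_{T_j^0}\neq\emptyset$ only once, through Lemma \ref{lem-02}, to upgrade the trivial lower bound $\sum_j\|T_j^0\|^p\geq\|\mathcal{T}^0\|^p$ to an equality.
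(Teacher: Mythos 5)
Your proof is correct, and it takes a genuinely different, more elementary route than the paper. The paper transfers optimality from the tuple to its components by a duality argument: it invokes Singer's characterization of best approximation (Theorem \ref{th-singer}) together with the description (\ref{eq-rs}) of the extreme points of $B_{\mathcal{K}(\mathcal{X},\mathcal{Y})^*}$, and then runs an equality analysis in H\"older's inequality to show that the extreme functionals certifying $\textbf{z}^0\mathcal{S}\in P_{\mathbb{F}^d\mathcal{S}}(\mathcal{T})$ split into functionals $\frac{1}{\|f_{ij}\|}f_{ij}\otimes x_i$ certifying $z_j^0S_j\in P_{\mathbb{F}S_j}(T_j)$ for each $j$; this is where compactness and reflexivity actually enter. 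Your squeeze argument replaces all of that by three ingredients: the unconditional estimate $dist(\mathcal{T},\mathbb{F}^d\mathcal{S})^p\leq\sum_{j=1}^d dist(T_j,\mathbb{F}S_j)^p$ (which, as you observe, needs no smoothness hypothesis, in contrast with the paper's derivation of (\ref{eq-013}) from (\ref{eq-012})), the trivial bound $dist(T_j,\mathbb{F}S_j)\leq\|T_j^0\|$, and Lemma \ref{lem-02} applied to $\mathcal{T}^0$, which closes the chain via $\sum_{j=1}^d\|T_j^0\|^p=\|\mathcal{T}^0\|^p=dist(\mathcal{T},\mathbb{F}^d\mathcal{S})^p$; termwise comparison then gives (\ref{eq-02}) and (\ref{eq-07}) simultaneously. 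What your approach buys is economy and extra generality: it never touches the dual space, so the statement in fact holds for bounded (not necessarily compact) tuples without reflexivity, those hypotheses being needed only so that the hypotheses $dist(\mathcal{T},\mathbb{F}^d\mathcal{S})=\|\mathcal{T}^0\|$ and $\cap_{j=1}^dM_{T_j^0}\neq\emptyset$ are reasonable to impose. What the paper's approach buys is the explicit dual certificates: the computations around (\ref{eq-09}) and (\ref{eq-08}) are reused almost verbatim in Theorem \ref{th-07} and in the smoothness example, so the heavier machinery pays off elsewhere in the paper even though it is not needed for this theorem. Your closing remark about why one cannot simply specialize $\textbf{z}$ to a single nonzero coordinate is also accurate, and it identifies exactly the obstruction that forces the paper into its duality argument.
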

\begin{proof}
	Since $\cap_{j=1}^dM_{T_j^0}\neq \emptyset,$ from Lemma \ref{lem-02} it follows that 
	\[M_{\mathcal{T}^0}=\cap_{j=1}^dM_{T_j^0} \quad \text{ and }\quad \|\mathcal{T}^0\|^p=\sum_{j=1}^d\|T_j^0\|^p.\]
	Since $dist(\mathcal{T},\mathbb{F}^d\mathcal{S})=\|\mathcal{T}^0\|$ implies that $\textbf{z}^0\mathcal{S}\in P_{\mathbb{F}^d\mathcal{S}}(\mathcal{T}),$ so by Theorem \ref{th-singer} and (\ref{eq-rs}), there exist $f_i\in E_{(\ell_{p}^d(\mathcal{Y}))^*},$  $x_i \in E_{\mathcal{X}}$ and  numbers $t_i>0$ for $1\leq i\leq h,$  such that $$\sum_{i=1}^h t_i=1,~ \sum_{i=1}^h t_if_i\otimes x_i(\textbf{z}\mathcal{S})=0~\forall ~\textbf{z}\in \mathbb{F}^d, \text{ and } f_i\otimes x_i(\mathcal{T}^0)=\|\mathcal{T}^0\|,~ \forall ~1\leq i\leq h. $$
	Since $(\ell_{p}^d(\mathcal{Y}))^*=\ell_{q}^d(\mathcal{Y}^*),$ for each $1\leq i \leq h,$ there exists $f_{ij}\in \mathcal{Y}^*$ for $1\leq j\leq d,$ such that $f_{i}=(f_{i1},\ldots,f_{id})\in \ell_{q}^d(\mathcal{Y}^*).$ Clearly, $\|f_i\|_q=1.$ It is easy to observe that if $f_{ij}\neq 0, $ then $\frac{1}{\|f_{ij}\|}f_{ij}\in E_{\mathcal{Y}^*}$ and so $\frac{1}{\|f_{ij}\|}f_{ij}\otimes x_i\in E_{\mathcal{K}(\mathcal{X},\mathcal{Y})^*}.$
	Using H\"{o}lder's inequality, for each $1\leq i\leq h,$ we get 
	\begin{equation}\label{eq-09}
		\begin{split}
			\|\mathcal{T}^0\|= f_i\otimes x_i(\mathcal{T}^0)
			&= f_i(\mathcal{T}^0(x_i))\\
			&	=\sum_{j=1}^d f_{ij}(T_j^0x_i)\\
			&	\leq \sum_{j=1}^d \|f_{ij}\|\|T_j^0x_i\|\\
			&	\leq\|f_{i}\|_q    \bigg(  \sum_{j=1}^d \|T_j^0x_i\|^p    \bigg)^{\frac{1}{p}}\\
			&	= \|\mathcal{T}^0x_i\|\leq \|\mathcal{T}^0\|.
		\end{split}
	\end{equation}
	So equality holds in the above inequalities. Thus, $\|\mathcal{T}^0x_i\|=\|\mathcal{T}^0\|,$ that is $x_i\in M_{\mathcal{T}^0},$  for all $1\leq i\leq h.$ Since $M_{\mathcal{T}^0}=\cap_{j=1}^dM_{T_j^0},$ we have $\|T_j^0x_i\|=\|T_j^0\|.$ Moreover, 
	\begin{equation}\label{eq-08}
		f_{ij}(T_j^0x_i)=\|f_{ij}\|\|T_j^0x_i\| 
		\quad \Rightarrow \quad f_{ij}\otimes x_i(T_j^0)=\|f_{ij}\|\|T_j^0\|.
	\end{equation}
	For $p=1,$ the equality in (\ref{eq-09}) shows that $\|f_{ij}\|=\|f_i\|_\infty=1.$ For $1<p<\infty,$ from the equality condition of H\"{o}lder's inequality in (\ref{eq-09}), we have $$\|f_{ij}\|^q=\lambda \|T_j^0x_i\|^p=\lambda \|T_j^0\|^p$$ for all $1\leq j\leq d$ and for some non-zero constant $\lambda.$  	If for some $1\leq j\leq d,~\|T_j^0\|=0,$ then $T_j=z_j^0S_j$ and so $dist(T_j,\mathbb{F}S_j)=0,$ that is (\ref{eq-02}) holds trivially. So assume that $\|T_j^0\|\neq 0,$ that is $\|f_{ij}\|\neq 0.$
	Now for all $\textbf{z}=(z_1,\ldots z_d)\in \mathbb{F}^d,$
	\begin{eqnarray*}
		&&\sum_{i=1}^h t_if_i\otimes x_i(\textbf{z}\mathcal{S})=0\\
		&\Rightarrow & \sum_{i=1}^h t_if_i(\textbf{z}\mathcal{S}x_i)=0\\
		&\Rightarrow& \sum_{i=1}^h t_i\sum_{j=1}^d f_{ij} (z_jS_jx_i)=0\\
		&\Rightarrow & \sum_{j=1}^d z_j\bigg( \sum_{i=1}^h t_i f_{ij}(S_jx_i)\bigg)=0\\
		&\Rightarrow & \sum_{i=1}^h t_i f_{ij}(S_jx_i)=0, \quad \forall ~1\leq j\leq d.\\
		&\Rightarrow& \sum_{i=1}^h t_i \frac{1}{\|f_{ij}\|}f_{ij}\otimes x_i(S_j)=0 .
	\end{eqnarray*}
 Now  from Theorem \ref{th-singer}, it follows that $z_j^0S_j\in P_{\mathbb{F}S_j}(T_j),$ equivalently
$$dist(T_j,\mathbb{F}S_j)=\|T_j^0\|.$$ Therefore,
$$dist(\mathcal{T},\mathbb{F}^d\mathcal{S})^p=\|\mathcal{T}^0\|^p=\sum_{j=1}^d\|T_j^0\|^p=\sum_{j=1}^ddist(T_j,\mathbb{F}S_j)^p.  \eqno \qedhere $$
\end{proof}

Now, we exhibit examples of tuples satisfying the condition of Theorem \ref{th-02}.

\begin{example}\label{ex-01}
(a) Consider a reflexive Banach space $\mathcal{X}.$ Choose $B_j,A\in \mathcal{K}(\mathcal{X},\mathcal{Y}).$ Suppose $x\in M_{A}.$ Let $H$ be a hyperspace such that $x\perp_B H.$ Define
\begin{align*}
T_j(\alpha x+h)&=\alpha Ax+\frac{1}{j+1}Ah,\\
S_j(\alpha x+h)&=B_j(h), \quad\text{ where }\alpha\in \mathbb{F}, h\in H.
 \end{align*}
If is easy to see that $S_j,T_j$ are compact and $x\in M_{T_j}$ for all $j\in \mathbb{N}.$ Indeed,
\begin{eqnarray*}
&&\|T_j(\alpha x+h)\|\\
&=& \|\frac{1}{j+1}A(\alpha x +h)+(1-\frac{1}{j+1})A(\alpha x)\|	\\
&\leq&\frac{1}{j+1}\|A(\alpha x +h)\|+(1-\frac{1}{j+1})\|A(\alpha x)\|	\\
&\leq&\frac{1}{j+1}\|A\|\|(\alpha x +h)\|+(1-\frac{1}{j+1})\|A\|\|\alpha x\|	\\
&\leq&\frac{1}{j+1}\|A\|\|(\alpha x +h)\|+(1-\frac{1}{j+1})\|A\|\|(\alpha x+h)\|,~\text{since } x\perp_B h	\\
&=&\|A\|\|(\alpha x +h)\|.
\end{eqnarray*}
Thus $\|T_j\|\leq\|A\|.$
Moreover, $\|T_j(x)\|=\|Ax\|=\|A\|,$ that is, $x\in \cap_{j=1}^dM_{T_j}.$  On the other hand, since $T_j$ is a convex combination of two compact operators, so $T_j$ is compact.  To prove the compactness of $S_j,$ suppose $C_j(\alpha x+h)=\alpha B_jx,$ where $\alpha\in \mathbb{F}$,  and $h\in H.$ Then $$\|C_j(\alpha x+h)\|\leq\|B_j\|\|\alpha x\|\leq \|B_j\|\|\alpha x+h\|$$ implies that $C_j$ is compact, and so $S_j=B_j-C_j$ is compact. 
Now, assume $\mathcal{T}=(T_1,\ldots,T_j), \mathcal{S}=(S_1,\ldots,S_d).$ Observe that, for all $\textbf{z}=(z_1,\ldots,z_d)\in \mathbb{F}^d,$
\[\|\mathcal{T}-\textbf{z}\mathcal{S}\|^p\geq \|\mathcal{T}x-\textbf{z}\mathcal{S}x\|^p=\sum_{j=1}^d\|T_jx-z_jS_jx\|^p=\sum_{j=1}^d\|T_jx\|^p=\sum_{j=1}^d\|T_j\|^p.\] Thus, 
 $$dist(\mathcal{T},\mathbb{F}^d\mathcal{S})^p\geq \sum_{j=1}^d\|T_j\|^p =\|\mathcal{T}\|^p,$$ where the last equality follows from Lemma \ref{lem-02}. Therefore, $ dist(\mathcal{T},\mathbb{F}^d\mathcal{S})^p=\|\mathcal{T}\|^p,$ and so (\ref{eq-07}) holds.\\
 
 (b) Choose $\Lambda_j=(1,\lambda_{j2},\lambda_{j3},\ldots)\in c_0$	such that $\|\Lambda_j\|_\infty=1.$ Let $1<m<\infty.$ Define $T_j:\ell_m\to\ell_m,$ and $S_j:\ell_m\to\ell_m,$ as follows:
 \begin{align*}
 T_j(x_1,x_2,x_3,\ldots)&=(x_1,\lambda_{j2}x_2,\lambda_{j3}x_3,\ldots)\\
 S_j(x_1,x_2,x_3,\ldots)&=(0,\lambda_{j2}x_2,\lambda_{j3}x_3,\ldots).
 \end{align*}
 Then it is easy to observe that $T_j,S_j$ are compact, $\|T_j\|=\|\Lambda_j\|_\infty=1,$ and $e_1\in \cap_{j=1}^d M_{T_j}.$ Now, proceeding as the previous example, we see (\ref{eq-07}) holds for $\mathcal{T}=(T_1,T_2,\ldots,T_d)$ and $\mathcal{S}=(S_1,S_2,\ldots,S_d).$
\end{example}

The next theorem provides a relation between the G\^ateaux derivatives of tuples of operators and that of its components.
\begin{theorem}\label{th-07}
	Suppose $\mathcal{X}$ is reflexive. Let $\mathcal{T},\mathcal{S}\in \mathcal{K}(\mathcal{X},\ell_{p}^d(\mathcal{Y})),$ where $\cap_{i=1}^dM_{T_i}\neq \emptyset.$ Then 
	\begin{align*}
	\sum_{i=1}^d\rho_-(T_i,S_i)\leq 	\rho_-(\mathcal{T},\mathcal{S})\leq 	\rho_+(\mathcal{T},\mathcal{S})\leq\sum_{i=1}^d\rho_+(T_i,S_i).
	\end{align*}
\end{theorem}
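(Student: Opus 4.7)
The middle inequality $\rho_-(\mathcal{T},\mathcal{S})\le\rho_+(\mathcal{T},\mathcal{S})$ is automatic from the definitions and already noted in the introduction, so only the two outer inequalities require work. My plan is to use the extreme-point representations $\rho_+(\mathcal{T},\mathcal{S})=\sup\{\Phi(\mathcal{S}):\Phi\in E_{J(\mathcal{T})}\}$ and $\rho_-(\mathcal{T},\mathcal{S})=\inf\{\Phi(\mathcal{S}):\Phi\in E_{J(\mathcal{T})}\}$, combined with the description \eqref{eq-rs} of $E_{\mathcal{K}(\mathcal{X},\ell_p^d(\mathcal{Y}))^*}$ and the H\"older analysis already used in the proof of Theorem \ref{th-02}.

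Fix $\Phi=F\otimes x\in E_{J(\mathcal{T})}$ with $x\in E_{\mathcal{X}}$ and $F=(f_1,\ldots,f_d)\in E_{\ell_q^d(\mathcal{Y}^*)}$. Unpacking $\Phi(\mathcal{T})=\|\mathcal{T}\|$ via the H\"older chain
$$\|\mathcal{T}\|=\sum_{i=1}^d f_i(T_ix)\le\sum_{i=1}^d\|f_i\|\|T_ix\|\le\|F\|_q\|\mathcal{T}x\|\le\|\mathcal{T}\|$$
forces equality throughout. This yields three pieces of information, parallel to what happens in the proof of Theorem \ref{th-02}: $x\in M_{\mathcal{T}}$, which by Lemma \ref{lem-02} places $x\in\cap_{i=1}^d M_{T_i}$; each nonzero $f_i$ satisfies $f_i/\|f_i\|\in J(T_ix)$; and in the case $p>1$ the H\"older-equality condition pins $\|f_i\|=\|T_i\|^{p-1}/\|\mathcal{T}\|^{p-1}$. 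In particular $(f_i/\|f_i\|)\otimes x\in J(T_i)$, so applying Bauer's principle to the weak$^*$-compact convex set $J(T_i)$ gives
$$\rho_-(T_i,S_i)\le \bigl(f_i/\|f_i\|\bigr)(S_ix)\le\rho_+(T_i,S_i)$$
for every $i$ with $f_i\neq 0$. Writing $\Phi(\mathcal{S})=F(\mathcal{S}x)=\sum_i\|f_i\|\cdot(f_i/\|f_i\|)(S_ix)$ and weighting by the non-negative scalars $\|f_i\|$ then produces $\sum_i\|f_i\|\rho_-(T_i,S_i)\le\Phi(\mathcal{S})\le\sum_i\|f_i\|\rho_+(T_i,S_i)$, and taking $\sup$ or $\inf$ over $\Phi\in E_{J(\mathcal{T})}$ would return the desired bounds on $\rho_\pm(\mathcal{T},\mathcal{S})$.

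The main obstacle is removing the H\"older-derived weights $\|f_i\|$ to land on the \emph{unweighted} sums in the statement. For $p=1$ the H\"older-equality condition forces each nonzero $\|f_i\|$ to equal $1$, so the weights disappear automatically. For $1<p<\infty$ I expect the cleanest route is to pair the pointwise estimate above with the companion operator-norm inequality $\|\mathcal{T}+t\mathcal{S}\|^p\le\sum_{i=1}^d\|T_i+tS_i\|^p$ (obtained by interchanging the supremum over $x\in S_{\mathcal{X}}$ with the sum inside the $p$-norm); together with $\|\mathcal{T}\|^p=\sum_i\|T_i\|^p$ from Lemma \ref{lem-02}, the auxiliary function $\phi(t):=\sum_i\|T_i+tS_i\|^p-\|\mathcal{T}+t\mathcal{S}\|^p$ is non-negative with $\phi(0)=0$, so $t=0$ is a minimum and $\phi'_+(0)\ge 0\ge\phi'_-(0)$. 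Chasing these inequalities through the chain-rule identity $(u^p)'(0)=p\,u(0)^{p-1}u'(0)$ and reconciling the resulting weighted factors $\|T_i\|^{p-1}$ with the Hölder weights $\|f_i\|=\|T_i\|^{p-1}/\|\mathcal{T}\|^{p-1}$ should close the unweighted form appearing in the theorem.
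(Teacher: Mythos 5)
Your Hölder analysis is carried out more carefully than the paper's own proof, and that care exposes the real issue. From $\Phi=F\otimes x\in E_{J(\mathcal{T})}$ you correctly get only $(f_i/\|f_i\|)\otimes x\in J(T_i)$ and hence the \emph{weighted} estimate $\Phi(\mathcal{S})\le\sum_i\|f_i\|\rho_+(T_i,S_i)$ with $\|f_i\|=(\|T_i\|/\|\mathcal{T}\|)^{p-1}$ for $1<p<\infty$ (the published proof of Theorem \ref{th-07} asserts $f_i\otimes x(T_i)=\|T_i\|$, i.e.\ silently sets all weights equal to $1$). For $p=1$ your observation that Hölder equality forces $\|f_i\|=1$ whenever $T_i\neq 0$ (and $\rho_+(T_i,S_i)=\|S_i\|\ge 0$ when $T_i=0$) does finish the proof. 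The genuine gap is your final step for $1<p<\infty$: the auxiliary function $\phi(t)=\sum_i\|T_i+tS_i\|^p-\|\mathcal{T}+t\mathcal{S}\|^p$ is indeed nonnegative with $\phi(0)=0$, but $\phi'_+(0)\ge0$ unwinds to $p\sum_i\|T_i\|^{p-1}\rho_+(T_i,S_i)\ge p\|\mathcal{T}\|^{p-1}\rho_+(\mathcal{T},\mathcal{S})$, which is exactly the same weighted inequality again; the factors $(\|T_i\|/\|\mathcal{T}\|)^{p-1}$ do not cancel, so this route cannot ``close the unweighted form.''

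No route can, because the unweighted statement is false for $1<p<\infty$. Take $p=2$, $d=2$, $\mathcal{X}=\mathcal{Y}=\ell_2^2$, $T_1=3I$, $T_2=4I$, $S_1=-I$, $S_2=0$: then $\cap_i M_{T_i}=S_{\mathcal{X}}$, $\|\mathcal{T}+t\mathcal{S}\|=\sqrt{(3-t)^2+16}$, so $\rho_+(\mathcal{T},\mathcal{S})=-\tfrac35$, while $\rho_+(T_1,S_1)+\rho_+(T_2,S_2)=-1<-\tfrac35$; replacing $S_1$ by $I$ violates the left-hand inequality as well. (Your weighted bound is sharp here: $\tfrac35(-1)+\tfrac45\cdot0=-\tfrac35$.) So the correct conclusion of your argument is
\begin{align*}
\sum_{i=1}^d w_i\,\rho_-(T_i,S_i)\ \le\ \rho_-(\mathcal{T},\mathcal{S})\ \le\ \rho_+(\mathcal{T},\mathcal{S})\ \le\ \sum_{i=1}^d w_i\,\rho_+(T_i,S_i),\qquad w_i=\Big(\tfrac{\|T_i\|}{\|\mathcal{T}\|}\Big)^{p-1},
\end{align*}
with $w_i=1$ when $p=1$; this weighted version is what your proof (and, after inserting the missing normalization, the paper's intended argument) actually establishes, and it still suffices for the smoothness conclusion of Corollary \ref{cor-03}, since smoothness of each $T_i$ makes the two weighted bounds coincide. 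In short: your proposal is sound up to the weighted inequality and for $p=1$, but the de-weighting step is a genuine gap, and the theorem as printed fails for $1<p<\infty$ precisely at the point you flagged.
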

\begin{proof}
Suppose $\phi\in E_{J(\mathcal{T})}.$ Then $\phi\in E_{\mathcal{K}(\mathcal{X},\ell_{p}^d(\mathcal{Y}))^*}.$ So there exists $x\in E_{\mathcal{X}}$ and $f=(f_1,\ldots,f_d)\in E_{\ell_{q}^d(\mathcal{Y}^*)}$	such that $\phi=f\otimes x.$ Now proceeding similarly as (\ref{eq-09}) and (\ref{eq-08}), we get $f_i\otimes x(T_i)=\|T_i\|,$ that is $f_i\otimes x\in J(T_i)$ for all $1\leq i\leq d.$ Moreover, $f_i\otimes x\in E_{\mathcal{K}(\mathcal{X},\mathcal{Y})^*}.$ Now, 
\begin{eqnarray*}
	\phi(\mathcal{S})&=& f(\mathcal{S}x)\\
	&=&\sum_{i=1}^df_i\otimes x(S_i)\\
	&\leq & \sum_{i=1}^d \rho_+(T_i,S_i)\\
	\Rightarrow \rho_+(\mathcal{T},\mathcal{S})	&\leq & \sum_{i=1}^d \rho_+(T_i,S_i).
	\end{eqnarray*} 
Similarly, we get $\sum_{i=1}^d\rho_-(T_i,S_i)\leq 	\rho_-(\mathcal{T},\mathcal{S}).$
\end{proof}

Note that Example \ref{ex-01} provides us examples of tuples satisfying the condition of Theorem \ref{th-07}. 
In Proposition \ref{prop-04}, we proved the distance formula assuming smoothness of a tuple of operators. As an application of the last theorem, we immediately get a sufficient condition for the smoothness of a tuple of operators in terms of its components.
\begin{cor}\label{cor-03}
	Suppose $\mathcal{X}$ is reflexive. Let $\mathcal{T}\in \mathcal{K}(\mathcal{X},\ell_{p}^d(\mathcal{Y})),$ where $\cap_{i=1}^dM_{T_i}\neq \emptyset.$  If $T_i$ is smooth for each $1\leq i\leq d,$ then $\mathcal{T}$ is smooth.
\end{cor}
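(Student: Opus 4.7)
The plan is to read the corollary as a direct consequence of Theorem \ref{th-07} via the characterization of smoothness by equality of the one-sided G\^ateaux derivatives. Recall that $\mathcal{T}$ is smooth if and only if the norm on $\mathcal{K}(\mathcal{X},\ell_p^d(\mathcal{Y}))$ is G\^ateaux differentiable at $\mathcal{T}$, which in turn is equivalent to $\rho_-(\mathcal{T},\mathcal{S}) = \rho_+(\mathcal{T},\mathcal{S})$ for every $\mathcal{S} \in \mathcal{K}(\mathcal{X},\ell_p^d(\mathcal{Y}))$.

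First, I would fix an arbitrary $\mathcal{S} = (S_1,\ldots,S_d) \in \mathcal{K}(\mathcal{X},\ell_p^d(\mathcal{Y}))$. The hypothesis $\cap_{i=1}^d M_{T_i} \neq \emptyset$ depends only on $\mathcal{T}$, so it is available for use with this arbitrary $\mathcal{S}$. Applying Theorem \ref{th-07} directly then yields
\[
\sum_{i=1}^d \rho_-(T_i,S_i) \;\leq\; \rho_-(\mathcal{T},\mathcal{S}) \;\leq\; \rho_+(\mathcal{T},\mathcal{S}) \;\leq\; \sum_{i=1}^d \rho_+(T_i,S_i).
\]

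Next, I would invoke the smoothness of each $T_i$: since $T_i$ is smooth, the norm on $\mathcal{K}(\mathcal{X},\mathcal{Y})$ is G\^ateaux differentiable at $T_i$, so $\rho_-(T_i,S_i) = \rho_+(T_i,S_i)$ for every $S_i \in \mathcal{K}(\mathcal{X},\mathcal{Y})$. In particular this applies to the components of the chosen $\mathcal{S}$, hence the extreme sums in the above chain coincide, forcing $\rho_-(\mathcal{T},\mathcal{S}) = \rho_+(\mathcal{T},\mathcal{S})$. Since $\mathcal{S}$ was arbitrary, $\mathcal{T}$ is smooth, completing the proof.

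There is no real obstacle here beyond ensuring the hypotheses of Theorem \ref{th-07} are met for the generic direction $\mathcal{S}$; the sandwich argument is immediate. The only thing worth flagging is that smoothness at $T_i$ must be invoked as the equality of the one-sided derivatives in every direction (not merely in a fixed direction), which is precisely the standard G\^ateaux-differentiability reformulation of smoothness recalled in the introduction.
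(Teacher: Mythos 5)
Your argument is correct and coincides with the paper's own proof: both invoke Theorem \ref{th-07} with the hypothesis $\cap_{i=1}^d M_{T_i}\neq\emptyset$ for an arbitrary direction $\mathcal{S}$, and then use smoothness of each $T_i$ (equality of the one-sided G\^ateaux derivatives in every direction) to collapse the sandwich inequality and conclude $\rho_-(\mathcal{T},\mathcal{S})=\rho_+(\mathcal{T},\mathcal{S})$ for all $\mathcal{S}$, hence smoothness of $\mathcal{T}$. No gaps; your flag about needing smoothness in every direction, not just one, is exactly the point the paper relies on.
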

\begin{proof}
	Since each $T_i $ is smooth, $\rho_+(T_i,S_i)=\rho_-(T_i,S_i)$ for all $S_i\in \mathcal{K}(\mathcal{X},\mathcal{Y}).$  Hence from Theorem \ref{th-07} it follows that $	\rho_-(\mathcal{T},\mathcal{S})=\rho_+(\mathcal{T},\mathcal{S})$ for all $\mathcal{S}\in \mathcal{K}(\mathcal{X},\ell_{p}^d(\mathcal{Y})),$  and so we conclude that $\mathcal{T}$ is smooth. 
\end{proof}

\begin{remark}
	In Example \ref{ex-01}, if we additionally assume that $\mathcal{X}$ is strictly convex, and $\mathcal{Y}$ is smooth, then it is easy to check that $\cap_{j=1}^dM_{T_j}=\{\alpha x:|\alpha|=1\}.$ Therefore, each $T_j$ is smooth, and so the tuple $\mathcal{T}$ is smooth by Corollary \ref{cor-03}.
\end{remark}

The converse of the above corollary is not true. The following example shows that there exists a smooth operator $\mathcal{T}\in \mathcal{K}(\mathcal{X},\ell_2^d(\mathcal{Y}))$ such that $\cap_{i=1}^dM_{T_i}\neq \emptyset.$ However, $T_i\in\mathcal{K}(\mathcal{X},\mathcal{Y})$ is not smooth  for each $1\leq i\leq d.$ 

\begin{example}
	Suppose $1<m<\infty$ and $\mathcal{X}=\mathcal{Y}=\ell_m^d.$ For $1\leq n\leq d,$ define $T_n:\ell_m^d\to \ell_m^d$ as follows.
	\[T_n(x)=\Big(x_1,\frac{x_2}{n+1},\ldots,\frac{x_{n-1}}{n+1},x_n,\frac{x_{n+1}}{n+1},\ldots,\frac{x_{d}}{n+1}\Big),\] 
    where $x=(x_1,x_2,\ldots,x_d)\in \ell_m^d.$
	Clearly, $\|T_n\|=1$ and $ M_{T_n}=span\{e_1,e_n\}\cap S_{\mathcal{X}}$ for all $1\leq n\leq d,$ where $e_n$ is the standard co-ordinate vector. So $T_n$ is not smooth for any $1\leq n\leq d.$ 
	Let $\mathcal{T}=(T_1,T_2,\ldots,T_d)\in \mathcal{K}(\mathcal{X},\ell_p^d(\mathcal{Y})).$ Since $\cap_{n=1}^dM_{T_n}=\{\alpha e_1:\alpha\in \mathbb{F},|\alpha|=1\},$ by Lemma \ref{lem-02}, $$M_{\mathcal{T}}=\{\alpha e_1:\alpha\in \mathbb{F},|\alpha|=1\} \text{ and } \|\mathcal{T}\|^p=\sum_{n=1}^d\|T_n\|^p=d.$$ We claim that $\mathcal{T}$ is smooth. Let $f\in J(\mathcal{T}e_1).$ Then $f\in (\ell_p^d(\mathcal{Y}))^*=\ell_q^d(\mathcal{Y}^*),$ where $\frac{1}{p}+\frac{1}{q}=1,$ that is, $f=(f_1,\ldots,f_d),$ where $f_n\in \mathcal{Y}^*$ for all $1\leq n\leq d.$ Note that, $\|\mathcal{T}\|=\|\mathcal{T}e_1\|=f(\mathcal{T}e_1).$ Now, proceeding similarly as (\ref{eq-09}) and (\ref{eq-08}), we get 
	\[f_i(T_ie_1)=\|f_i\|\|T_ie_1\|, ~\|f_i\|\neq 0, ~\forall~1\leq i\leq d,\]
	which implies that $\frac{1}{\|f_i\|}f_i\in J(T_ie_1).$
	Since $\ell_m^d$ is smooth space, $T_ie_1$ is smooth and so $J(T_ie_1)=\{\frac{1}{\|f_i\|}f_i\}$ for all $1\leq i\leq d.$ Thus, $J(\mathcal{T}e_1)$ is singleton and so $\mathcal{T}e_1$ is smooth, consequently $\mathcal{T}$ is smooth.
\end{example}

The next corollary  provides a sufficient condition for the equivalence $	\mathcal{T}\perp_B \mathbb{F}^d\mathcal{S}  \Leftrightarrow T_j\perp_B S_j$  for each $ 1\leq j\leq d.$
\begin{cor}\label{cor-01}
	Suppose $\mathcal{X}$ is reflexive and $\mathcal{T},\mathcal{S}\in \mathcal{K}(\mathcal{X},\ell_p^d(\mathcal{Y})).$  Suppose $\cap_{i=1}^dM_{T_i}\neq \emptyset.$ Then
	\begin{equation}\label{eq-06}
		\mathcal{T}\perp_B \mathbb{F}^d\mathcal{S} \quad \Rightarrow \quad T_j\perp_B S_j \text{ for each } 1\leq j\leq d.
		\end{equation} Moreover, if each $T_j$ is smooth, then 
	\[T_j\perp_B S_j \text{ for all } 1\leq j\leq d \quad \Rightarrow \quad \mathcal{T}\perp_B \mathbb{F}^d\mathcal{S} .\]
\end{cor}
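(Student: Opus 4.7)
The plan is to extract the forward implication directly from Theorem \ref{th-02} by specialising $\mathbf{z}^0=0$, and to derive the converse from Theorem \ref{th-07} combined with the standard Gateaux-derivative characterisation $x\perp_B y \Leftrightarrow \rho_-(x,y)\le 0\le \rho_+(x,y)$.

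For the forward direction, observe that $\mathcal{T}\perp_B \mathbb{F}^d\mathcal{S}$ is exactly the statement that $0$ is a best approximation to $\mathcal{T}$ from the subspace $\mathbb{F}^d\mathcal{S}$, i.e.\ $dist(\mathcal{T},\mathbb{F}^d\mathcal{S})=\|\mathcal{T}\|$. If $\mathcal{T}\in \mathbb{F}^d\mathcal{S}$, then $\mathcal{T}\perp_B \mathcal{T}$ forces $\mathcal{T}=0$, whence every $T_j=0$ and the claim is vacuous. Otherwise I apply Theorem \ref{th-02} with $\mathbf{z}^0=0$ so that $\mathcal{T}^0=\mathcal{T}$ and $T_j^0=T_j$; the assumption $\bigcap_{i=1}^d M_{T_i}\neq \emptyset$ is exactly the standing hypothesis of Theorem \ref{th-02}, and its conclusion gives $dist(T_j,\mathbb{F}S_j)=\|T_j\|$ for every $j$, which reads as $T_j\perp_B S_j$.

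For the converse, fix an arbitrary $\mathbf{z}=(z_1,\ldots,z_d)\in \mathbb{F}^d$; we must show $\mathcal{T}\perp_B \mathbf{z}\mathcal{S}$. Because $T_j\perp_B S_j$ is homogeneous in the second slot, $T_j\perp_B z_j S_j$ holds, so $\rho_-(T_j,z_j S_j)\le 0\le \rho_+(T_j,z_j S_j)$. Combined with the smoothness of $T_j$, which forces $\rho_-(T_j,\cdot)=\rho_+(T_j,\cdot)$, this yields $\rho_-(T_j,z_j S_j)=\rho_+(T_j,z_j S_j)=0$ for every $j$. Applying Theorem \ref{th-07} to the pair $\mathcal{T}$, $\mathbf{z}\mathcal{S}\in \mathcal{K}(\mathcal{X},\ell_p^d(\mathcal{Y}))$ then sandwiches
\begin{equation*}
0=\sum_{j=1}^d \rho_-(T_j,z_j S_j)\le \rho_-(\mathcal{T},\mathbf{z}\mathcal{S})\le \rho_+(\mathcal{T},\mathbf{z}\mathcal{S})\le \sum_{j=1}^d \rho_+(T_j,z_j S_j)=0,
\end{equation*}
so $\rho_-(\mathcal{T},\mathbf{z}\mathcal{S})=\rho_+(\mathcal{T},\mathbf{z}\mathcal{S})=0$ and the Gateaux characterisation yields $\mathcal{T}\perp_B \mathbf{z}\mathcal{S}$. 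The only mild subtlety is applying Theorem \ref{th-07} to the rescaled tuple $\mathbf{z}\mathcal{S}$, but this is legitimate since that theorem places no restriction on its second argument beyond compactness, and the norm attainment hypothesis $\bigcap_{i=1}^d M_{T_i}\neq\emptyset$ concerns only $\mathcal{T}$.
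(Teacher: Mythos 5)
Your proof is correct, and while the forward direction coincides with the paper's, the converse takes a genuinely different route. For the forward implication you do exactly what the paper does: dispose of the degenerate case $\mathcal{T}\in\mathbb{F}^d\mathcal{S}$ and apply Theorem \ref{th-02} with $\textbf{z}^0=0$, so that $dist(T_j,\mathbb{F}S_j)=\|T_j\|$ for every $j$. For the converse, the paper exploits smoothness structurally: smoothness of each $T_j$ together with $\cap_{i=1}^dM_{T_i}\neq \emptyset$ forces $\cap_{i=1}^dM_{T_i}=\{\alpha x:|\alpha|=1\}$ for a single unit vector $x$, the cited result in \cite[pp. 141]{MPS} converts $T_j\perp_B S_j$ into the pointwise statement $T_jx\perp_B S_jx$, and a direct norm estimate evaluated at $x$ gives $\|\mathcal{T}-\textbf{z}\mathcal{S}\|\geq \|\mathcal{T}\|$. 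You instead use smoothness only through G\^ateaux differentiability, deduce $\rho_+(T_j,z_jS_j)=\rho_-(T_j,z_jS_j)=0$ from $T_j\perp_B z_jS_j$, and squeeze $\rho_\pm(\mathcal{T},\textbf{z}\mathcal{S})=0$ via the sandwich of Theorem \ref{th-07}, exactly in the spirit of the paper's proof of Corollary \ref{cor-03}. Your route avoids identifying the common norm-attaining point and dispenses with the external pointwise-orthogonality lemma, at the price of invoking the one-sided-derivative characterization of Birkhoff-James orthogonality; the paper's route is more elementary, resting on a single convexity-free norm inequality. One small point of care for $\mathbb{F}=\mathbb{C}$: the derivatives $\rho_\pm$ only see real parameters, so $\rho_-(\mathcal{T},\textbf{z}\mathcal{S})\leq 0\leq \rho_+(\mathcal{T},\textbf{z}\mathcal{S})$ by itself yields $\|\mathcal{T}+t\textbf{z}\mathcal{S}\|\geq\|\mathcal{T}\|$ only for real $t$, not full orthogonality to the single element $\textbf{z}\mathcal{S}$; this is harmless in your argument, since taking $t=1$ and letting $\textbf{z}$ range over all of $\mathbb{F}^d$ already gives $\mathcal{T}\perp_B\mathbb{F}^d\mathcal{S}$, but it is cleaner to phrase the final step that way.
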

\begin{proof}
Note that if $\mathcal{T}\in \mathbb{F}^d\mathcal{S},$ then the result follows trivially.  So consider that $\mathcal{T}\notin \mathbb{F}^d\mathcal{S}.$ First assume	$\mathcal{T}\perp_B \mathbb{F}^d\mathcal{S},$ that is, $dist(\mathcal{T},\mathbb{F}^d\mathcal{S})=\|\mathcal{T}\|.$ Now, by Theorem \ref{th-02}, for each $1\leq j\leq d,$ $dist(T_j,\mathbb{F}S_j)=\|T_j\|$ and so $T_j\perp_B S_j.$ \\
Conversely, assume each $T_j$ is smooth. Then $M_{T_i}=\{\alpha x_i:\alpha\in \mathbb{F},|\alpha |=1\}$ for some $x_i\in S_{\mathcal{X}}$ and $T_ix_i$ is smooth in $\mathcal{Y}.$
From $\cap_{i=1}^dM_{T_i}\neq \emptyset,$ we get $x_i=\beta_ix,$ say, for all $1\leq i\leq d,$ where $\beta_i\in \mathbb{F},$  $|\beta_i|=1,$ and $x\in S_{\mathcal{X}}.$  Therefore, $\cap_{i=1}^dM_{T_i}=\{\alpha x:\alpha\in \mathbb{F},|\alpha|=1\}.$ By Lemma \ref{lem-02}, $x\in M_{\mathcal{T}}.$ Now from \cite[pp. 141]{MPS}, it follows that  $T_j\perp_B S_j$ implies  $T_jx\perp_B S_jx.$ Let $\textbf{z}=(z_1,\ldots,z_d)\in \mathbb{F}^d.$ Then 
\[\|\mathcal{T}-\textbf{z}\mathcal{S}\|^p\geq \|\mathcal{T}x-\textbf{z}\mathcal{S}x\|^p=\sum_{j=1}^d\|T_jx-z_jS_jx\|^p\geq \sum_{j=1}^d\|T_jx\|^p =\|\mathcal{T}x\|^p=\|\mathcal{T}\|^p. \]
Therefore, $\mathcal{T}\perp_B \mathbb{F}^d\mathcal{S}.$
\end{proof}

We end the discussion on the case $1\leq p<\infty$ with an interesting example. It shows that if $\cap_{i=1}^d{M_{T_j^0}}= \emptyset,$ then (\ref{eq-02}) and (\ref{eq-06}) may not hold but  (\ref{eq-07}) may hold.
\begin{example}
	Consider $\mathbb{F}=\mathbb{R}.$ Define $T_1,T_2,S_1,S_2:\ell_2^2\to\ell_2^2$ as follows.
	\begin{align*}
	&T_1(a,b)=\Big(\frac{a}{2},b\Big),\quad T_2(a,b)=\Big(a,\frac{b}{2}\Big),\quad \forall~(a,b)\in \ell_2^2,\\ 
&S_1(a,b)=\Big(\frac{a-b}{2},\frac{a-b}{2}\Big)=-S_2(a,b),\quad \forall~(a,b)\in \ell_2^2.
\end{align*} 
Let $\mathcal{T}=(T_1,T_2), \mathcal{S}=(S_1,S_2)\in \mathcal{K}(\ell_2^2,\ell_2^2(\ell_2^2)).$ Clearly, $\mathcal{T}\notin \mathbb{F}^2\mathcal{S}.$ Note that $$\langle T_1(1,1),S_1(1,1)\rangle =0=\langle T_2(1,1),S_2(1,1)\rangle.$$ Since $(\frac{1}{\sqrt{2}},\frac{1}{\sqrt{2}})\in M_{\mathcal{T}},$ so by \cite[Th. 2.3]{GS}, $\mathcal{T}\perp_B \mathbb{F}^2\mathcal{S},$ that is, $$dist(\mathcal{T},\mathbb{R}^2\mathcal{S})=\|\mathcal{T}\|=\frac{\sqrt{5}}{2}.$$ (Note that  \cite[Th. 2.3]{GS} is proved for $\mathbb{F}=\mathbb{C}.$ However, the same proof also holds for $\mathbb{F}=\mathbb{R}.$)
	Now it is easy to check that $M_{T_1}=\{\pm(0,1)\},$ $M_{T_2}=\{\pm (1,0)\}.$ Clearly, $M_{T_1}\cap M_{T_2}=\emptyset.$ Since $T_1(0,1)\not\perp S_1(0,1)$ and $T_2(1,0)\not\perp S_2(1,0),$ so by \cite[Th. 4.2.2]{MPS}, $$T_1\not\perp_B S_1 \text{ and } T_2\not\perp_B S_2,$$
	which implies that $$dist(T_1,\mathbb{R}S_1)\neq \|T_1\|, \text{ and } dist(T_2,\mathbb{R}S_2)\neq \|T_2\|.$$ This proves that here (\ref{eq-02}) and (\ref{eq-06}) are not true. However, we show that (\ref{eq-07}) holds. Note that 
	\[\|(T_1+\frac{1}{2}S_1)(a,b)\|^2=\|(T_2+\frac{1}{2}S_2)(a,b)\|^2=\frac{5}{8}, \quad \forall~(a,b)\in S_{\ell_2^2},\] and 
	$S_1(1,1)=S_2(1,1)=0.$ Since $(\frac{1}{\sqrt{2}},\frac{1}{\sqrt{2}})\in M_{T_i+\frac{1}{2}S_i}$ for $i=1,2,$ from \cite[Th. 4.2.2]{MPS} it follows that 
	\[T_1+\frac{1}{2}S_1\perp_B S_1,\quad T_2+\frac{1}{2}S_2\perp_B S_2.\]
	Equivalently, 
	\[dist(T_1,\mathbb{R}S_1)^2=\|T_1+\frac{1}{2}S_1\|^2=\frac{5}{8},\] and \[dist(T_2,\mathbb{R}S_2)^2=\|T_2+\frac{1}{2}S_2\|^2=\frac{5}{8}.\]
	So in this case,
	\[dist(\mathcal{T},\mathbb{R}^2\mathcal{S})^2=dist(T_1,\mathbb{R}S_1)^2+dist(T_2,\mathbb{R}S_2)^2.\] 
\end{example}

Now, we turn our attention to the simpler case $p=\infty.$ 

\begin{theorem}\label{th-01}
	Suppose $\mathcal{T},\mathcal{S}\in \mathcal{L}(\mathcal{X},\ell_{\infty}^d(\mathcal{Y}_k)).$ Then 
	\[\|\mathcal{T}\|=\max_{1\leq i\leq d}\|T_i\|, \text{ and }\]
	\[dist(\mathcal{T},\mathbb{F}^d\mathcal{S})=\max_{1\leq i\leq d}dist(T_i,\mathbb{F}S_i).\]
\end{theorem}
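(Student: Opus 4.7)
The plan is to exploit the fact that the $\ell_\infty$ norm on the direct sum decouples the components, so both equalities reduce to straightforward exchanges of $\max$ with $\sup$ (or $\inf$).

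For the first identity, I would start from the definition: for every $x \in S_\mathcal{X}$,
\[
\|\mathcal{T}x\| = \max_{1\leq i\leq d}\|T_i x\|.
\]
The bound $\|\mathcal{T}\| \leq \max_i \|T_i\|$ follows by taking $\sup_{x\in S_\mathcal{X}}$ after noting $\max_i \|T_i x\| \leq \max_i \|T_i\|$ for each $x$. The reverse inequality is immediate since $\|T_i x\| \leq \|\mathcal{T}x\| \leq \|\mathcal{T}\|$ for every $i$ and every $x\in S_\mathcal{X}$, whence $\|T_i\| \leq \|\mathcal{T}\|$ for all $i$.

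For the distance formula, the key point is that $\|\mathcal{T} - \mathbf{z}\mathcal{S}\| = \max_i \|T_i - z_i S_i\|$ by the first part, and the parameters $z_1,\ldots,z_d$ enter the different maximands independently. For the lower bound, fix any $\mathbf{z} = (z_1,\ldots,z_d)\in \mathbb{F}^d$; then for each $i$,
\[
\|T_i - z_i S_i\| \geq dist(T_i,\mathbb{F}S_i),
\]
so $\|\mathcal{T} - \mathbf{z}\mathcal{S}\| \geq \max_i dist(T_i,\mathbb{F}S_i)$, and taking $\inf$ over $\mathbf{z}$ gives $dist(\mathcal{T},\mathbb{F}^d\mathcal{S}) \geq \max_i dist(T_i,\mathbb{F}S_i)$. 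For the upper bound, pick $\epsilon > 0$ and, for each $i$, choose $z_i^\epsilon \in \mathbb{F}$ with $\|T_i - z_i^\epsilon S_i\| < dist(T_i,\mathbb{F}S_i) + \epsilon$. Setting $\mathbf{z}^\epsilon = (z_1^\epsilon,\ldots,z_d^\epsilon)$ yields
\[
dist(\mathcal{T},\mathbb{F}^d\mathcal{S}) \leq \|\mathcal{T} - \mathbf{z}^\epsilon\mathcal{S}\| = \max_i \|T_i - z_i^\epsilon S_i\| < \max_i dist(T_i,\mathbb{F}S_i) + \epsilon,
\]
and letting $\epsilon \to 0$ closes the argument.

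There is no real obstacle here: the only subtlety is that the best-approximation infimum need not be attained, which is handled by the $\epsilon$-approximation step above. No extreme-point or Singer-type machinery is needed, in sharp contrast to the $1\leq p<\infty$ case treated in Theorem \ref{th-02}; the $\ell_\infty$ geometry does all the work because each scalar $z_i$ can be tuned in isolation.
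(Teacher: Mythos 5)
Your proof is correct. The argument for the norm identity is exactly the paper's: $\|T_i\|\leq\|\mathcal{T}\|$ componentwise, and $\|\mathcal{T}x\|=\max_i\|T_ix\|\leq\max_i\|T_i\|$ for the reverse. For the distance formula the routes differ slightly in presentation: the paper observes that $\mathbb{F}^d\mathcal{S}$ is the $\ell_\infty$-direct sum of the subspaces $\mathbb{F}S_i$ (using the norm identity, which identifies $\mathcal{L}(\mathcal{X},\ell_\infty^d(\mathcal{Y}_k))$ with the $\ell_\infty$-sum of the $\mathcal{L}(\mathcal{X},\mathcal{Y}_k)$) and then invokes the known distance formula for $\ell_\infty$-direct sums from \cite{I}, whereas you prove that decoupling from scratch: the lower bound by bounding each $\|T_i-z_iS_i\|$ below by $dist(T_i,\mathbb{F}S_i)$ and taking the infimum over $\mathbf{z}$, and the upper bound by choosing $\epsilon$-near-best scalars $z_i^\epsilon$ independently, which correctly sidesteps the issue that the infimum need not be attained. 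The mathematical content is the same --- the $\ell_\infty$ norm lets each $z_i$ be tuned in isolation --- but your version is self-contained and elementary, while the paper's citation is shorter and situates the statement within the general theory of metric projections in $\ell_\infty$-sums; your remark that no Singer-type or extreme-point machinery is needed here, in contrast to the case $1\leq p<\infty$, is consistent with the paper's treatment.
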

\begin{proof}
	Since $\|T_ix\|\leq \|\mathcal{T}x\|$ for each $1\leq i\leq d$ and $x\in S_{\mathcal{X}},$ so $\|T_k\|\leq \|\mathcal{T}\|,$ that is, $\max_{1\leq i\leq d}\|T_i\|\leq \|\mathcal{T}\|.$ For the reverse inequality, observe that
	\begin{align*}
		\|\mathcal{T}x\|=\max_{1\leq i\leq d}\|T_ix\|&\leq\max_{1\leq i\leq d}\|T_i\|, \quad\text{ for all } x\in S_{\mathcal{X}}\\
		\Rightarrow \|\mathcal{T}\|&\leq\max_{1\leq i\leq d}\|T_i\|.
	\end{align*} 

Since $\mathbb{F}^d\mathcal{S}$ is the $\ell_\infty $ direct sum of $\mathbb{F}S_i,$ $(1\leq i\leq d)$ so from \cite{I}, it follows that 	$$dist(\mathcal{T},\mathbb{F}^d\mathcal{S})=\max_{1\leq i\leq d}dist(T_i,\mathbb{F}S_i). \eqno\qedhere$$

\end{proof}

The next corollary provides the equivalence of B-J orthogonality of tuples of operators and B-J orthogonality of its components.
\begin{cor}\label{cor-04}
	Suppose  $\mathcal{T},\mathcal{S}\in \mathcal{L}(\mathcal{X},\ell_{\infty}^d(\mathcal{Y}_k)).$ Then 
	\[\mathcal{T}\perp_B \mathbb{F}^d\mathcal{S}\quad \Leftrightarrow \quad T_i\perp_B S_i~\text{and } \|T_i\|=\|\mathcal{T}\|, \text{ for some } 1\leq i\leq d.\]
\end{cor}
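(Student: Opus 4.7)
The plan is to deduce this equivalence directly from Theorem \ref{th-01} by translating the orthogonality relations into equalities between distances and norms. Recall that for any $x$ in a Banach space and subspace $W$, $x\perp_B W$ is equivalent to $dist(x,W)=\|x\|$, so the statement $\mathcal{T}\perp_B \mathbb{F}^d\mathcal{S}$ is equivalent to $dist(\mathcal{T},\mathbb{F}^d\mathcal{S})=\|\mathcal{T}\|$, and analogously for each $T_i\perp_B S_i$. By Theorem \ref{th-01}, both of the quantities in the tuple identity decompose as a maximum over components:
\[
dist(\mathcal{T},\mathbb{F}^d\mathcal{S}) = \max_{1\leq i\leq d} dist(T_i,\mathbb{F}S_i),\qquad \|\mathcal{T}\|=\max_{1\leq i\leq d}\|T_i\|.
\]

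For the forward direction, assume $\mathcal{T}\perp_B \mathbb{F}^d\mathcal{S}$. Then the two maxima above coincide. Pick an index $i_0$ where the left-hand maximum is attained, so $dist(T_{i_0},\mathbb{F}S_{i_0})=\max_i\|T_i\|\geq \|T_{i_0}\|$. On the other hand, the trivial bound $dist(T_{i_0},\mathbb{F}S_{i_0})\leq \|T_{i_0}\|$ (obtained by choosing the scalar $0$) forces equality, which simultaneously yields $T_{i_0}\perp_B S_{i_0}$ and $\|T_{i_0}\|=\|\mathcal{T}\|$.

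For the converse, suppose there exists $i_0$ with $T_{i_0}\perp_B S_{i_0}$ and $\|T_{i_0}\|=\|\mathcal{T}\|$. Then $dist(T_{i_0},\mathbb{F}S_{i_0})=\|T_{i_0}\|=\|\mathcal{T}\|$, so the maximum $\max_i dist(T_i,\mathbb{F}S_i)$ is at least $\|\mathcal{T}\|$. The reverse inequality is automatic since $dist(T_i,\mathbb{F}S_i)\leq \|T_i\|\leq \|\mathcal{T}\|$ for each $i$. Combining via Theorem \ref{th-01} gives $dist(\mathcal{T},\mathbb{F}^d\mathcal{S})=\|\mathcal{T}\|$, i.e.\ $\mathcal{T}\perp_B \mathbb{F}^d\mathcal{S}$.

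There is no real obstacle here; the whole argument is a bookkeeping exercise once Theorem \ref{th-01} is in hand. The only point worth highlighting is that it is crucial to extract the condition $\|T_{i_0}\|=\|\mathcal{T}\|$ on the \emph{same} index $i_0$ for which $T_{i_0}\perp_B S_{i_0}$ holds; merely having $T_i\perp_B S_i$ at an index where $\|T_i\|<\|\mathcal{T}\|$ would not be enough to force equality in the distance formula.
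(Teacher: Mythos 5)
Your proof is correct and follows essentially the same route as the paper: both directions reduce the orthogonality statements to distance--norm equalities and then apply the two max-decompositions of Theorem \ref{th-01}, together with the trivial bounds $dist(T_i,\mathbb{F}S_i)\leq\|T_i\|\leq\|\mathcal{T}\|$. Your forward direction merely spells out slightly more explicitly the squeeze argument that the paper states as a chain of equalities.
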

\begin{proof}
	First assume that $\mathcal{T}\perp_B \mathbb{F}^d\mathcal{S}.$ Then $dist(\mathcal{T},\mathbb{F}^d\mathcal{S})=\|\mathcal{T}\|.$ By Theorem \ref{th-01}, we get for some $1\leq i\leq d,$
	\[dist(T_{i},\mathbb{F}S_{i})= \|T_{i}\|=\|\mathcal{T}\|=dist(\mathcal{T},\mathbb{F}^d\mathcal{S}),\] which implies that $T_i\perp_B S_i.$\\
	On the other hand, suppose $\|\mathcal{T}\|=\|T_i\|$ for some $1\leq i\leq d$ and $T_i\perp_B S_i.$ Then
	\begin{eqnarray*}
		&& dist(T_i,\mathbb{F}S_i)=\|T_i\|=\|\mathcal{T}\|\\
		& \Rightarrow &\max_{1\leq j\leq d} dist(T_j,\mathbb{F}S_j)\geq \|\mathcal{T}\|\\
		&\Rightarrow& dist(\mathcal{T},\mathbb{F}^d\mathcal{S})\geq \|\mathcal{T}\|, ~\text{ (using Theorem \ref{th-01})}. 
	\end{eqnarray*}
	Since $\|\mathcal{T}\|\geq dist(\mathcal{T},\mathbb{F}^d\mathcal{S})$ holds trivially, so we have $dist(\mathcal{T},\mathbb{F}^d\mathcal{S})= \|\mathcal{T}\|,$ that is, $\mathcal{T}\perp_B \mathbb{F}^d\mathcal{S}.$
\end{proof}

Note that, from the last corollary it follows that if $\|T_i\|=\|T_j\|$ for all $1\leq i,j\leq d,$  then $\mathcal{T}\perp_B \mathbb{F}^d\mathcal{S}\Leftrightarrow  T_i\perp_B S_i$ for some $1\leq i\leq d.$
Next, we obtain an interesting distance formula and equivalence of B-J orthogonality in $\mathcal{K}(\mathcal{X},\ell_\infty^d),$ using the previous results. For a detailed study of distance formula in the space of compact operators, the readers are invited to look into \cite{MP22}. Recall that $\mathcal{X}$ is said to be strictly convex, if $S_{\mathcal{X}}=E_{\mathcal{X}}.$ 
\begin{cor}
	Suppose $\mathcal{X}$ is a reflexive and strictly convex Banach space. Let $T,S\in \mathcal{K}(\mathcal{X},\ell_\infty^d)$ and $T=(f_1,\ldots,f_d), S=(g_1,\ldots, g_d),$ where $f_i,g_i\in \mathcal{X}^*,$ $1\leq i\leq d.$ Then the following are true.\\
	\rm(i) $$dist(T,\mathbb{F}^dS)=\underset{1\leq i\leq d}{\max}\|f_i|_{\ker(g_i)}\|.$$
	\rm(ii) If $\|T\|\neq 0,$ then $T\perp_B \mathbb{F}^dS$ implies that for some $1\leq i\leq d,$ $g_i(x)=0,$ where $x\in M_{f_i}.$ \\
	\rm(iii) If $\|f_i\|=\lambda (\neq 0)$ for all $1\leq i\leq d,$ then $T\perp_B \mathbb{F}^dS$ if and only if for some $1\leq i\leq d,$ $g_i(x)=0,$ where $x\in M_{f_i}.$
\end{cor}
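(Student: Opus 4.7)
The plan is to identify $T = (f_1, \ldots, f_d)$ and $S = (g_1, \ldots, g_d)$ as tuples of functionals, each $f_i, g_i \in \mathcal{X}^* = \mathcal{L}(\mathcal{X},\mathbb{F})$, which makes Theorem \ref{th-01} and Corollary \ref{cor-04} directly applicable. All three parts then reduce to standard one-functional facts on a reflexive, strictly convex Banach space.

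For (i), Theorem \ref{th-01} yields $dist(T,\mathbb{F}^d S) = \max_i dist(f_i,\mathbb{F} g_i)$, so it suffices to establish the classical identity $dist(f,\mathbb{F} g) = \|f|_{\ker g}\|$ for any $f,g \in \mathcal{X}^*$. When $g = 0$ both sides collapse to $\|f\|$. When $g \ne 0$, the inequality $\geq$ is immediate because every $x \in B_{\ker g}$ satisfies $|f(x)| = |(f-\lambda g)(x)| \leq \|f-\lambda g\|$; the reverse inequality uses Hahn--Banach to extend $f|_{\ker g}$ to some $\tilde f \in \mathcal{X}^*$ with $\|\tilde f\| = \|f|_{\ker g}\|$, and then observes that $f - \tilde f$ vanishes on the codimension-one subspace $\ker g$ and therefore equals $cg$ for some scalar $c$, giving $dist(f,\mathbb{F} g) \leq \|f - cg\| = \|\tilde f\|$.

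For parts (ii) and (iii), Corollary \ref{cor-04} translates $T \perp_B \mathbb{F}^d S$ into the existence of some $i$ with $f_i \perp_B g_i$ and $\|f_i\| = \|T\|$, so it only remains to characterize $f_i \perp_B g_i$ for a single functional. Since $\mathcal{X}$ is reflexive, each $f_i$ attains its norm, and \cite[Th. 4.2.2]{MPS} applied to the compact operator $f_i : \mathcal{X} \to \mathbb{F}$ gives that $f_i \perp_B g_i$ iff some $x \in M_{f_i}$ satisfies $f_i(x) \perp_B g_i(x)$ in $\mathbb{F}$; when $\|f_i\| \neq 0$, the latter is equivalent to $g_i(x) = 0$ because nonzero scalars are B-J orthogonal only to $0$. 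Strict convexity of $\mathcal{X}$ forces $M_{f_i} = \{\alpha x_0 : |\alpha|=1\}$ for a unique $x_0 \in S_{\mathcal{X}}$, making the phrase ``$g_i(x) = 0$ for $x \in M_{f_i}$'' unambiguous. Part (ii) then follows from the forward direction, with the hypothesis $\|T\| \neq 0$ ensuring $\|f_i\| \neq 0$. For (iii), the assumption $\|f_i\| = \lambda \neq 0$ for every $i$, combined with Theorem \ref{th-01}, gives $\|f_i\| = \|T\| = \lambda$ for all $i$, so Corollary \ref{cor-04} applies cleanly in both directions.

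The main obstacle will be confirming the applicability of \cite[Th. 4.2.2]{MPS} in this rank-one setting (where the characterization is perhaps more elementary than in full generality) and being tidy about the degenerate subcases $g_i = 0$ and $\|f_i\| = 0$; once those are handled, the rest of the argument is essentially bookkeeping.
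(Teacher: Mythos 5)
Your proposal is correct and follows the same skeleton as the paper: reduce to single functionals via Theorem \ref{th-01} for (i) and Corollary \ref{cor-04} for (ii)--(iii), then settle the one-functional statements. The differences are only in how those one-functional facts are handled. For (i), the paper cites \cite[Cor. 3.9]{MP22}, whereas you prove $dist(f,\mathbb{F}g)=\|f|_{\ker g}\|$ directly by Hahn--Banach; your argument is correct and is in fact more self-contained (it does not even need reflexivity to get the value of the distance, only to have it attained). For (ii)--(iii), the paper uses smoothness of $\mathcal{X}^*$ (from reflexivity plus strict convexity of $\mathcal{X}$) together with \cite[Th. 2.13]{SPM2} to get $f_i(x)g_i(x)=0$, while you invoke \cite[Th. 4.2.2]{MPS} for the pointwise characterization $f_i\perp_B g_i \Leftrightarrow g_i(x)=0$ for some $x\in M_{f_i}$. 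Be careful with that citation: as used elsewhere in the paper it is a Hilbert-space (Bhatia--\v{S}emrl type) statement, and such pointwise characterizations fail for general Banach domains, so it may not literally cover a functional on an arbitrary reflexive strictly convex $\mathcal{X}$. However, the fact you need is true and elementary: by James' characterization, $f_i\perp_B g_i$ iff some $\Phi\in S_{\mathcal{X}^{**}}$ satisfies $\Phi(f_i)=\|f_i\|$ and $\Phi(g_i)=0$, and reflexivity turns $\Phi$ into a point $x\in S_{\mathcal{X}}$ with $f_i(x)=\|f_i\|$, $g_i(x)=0$; strict convexity then makes $M_{f_i}$ a single point up to unimodular scalars, exactly as you note. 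With the citation replaced by this (or by the paper's route through \cite{SPM2}), your proof is complete; the handling of the degenerate cases and of the two directions in (iii) via Corollary \ref{cor-04} matches the paper.
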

\begin{proof}
	(i) From \cite[Cor. 3.9]{MP22}, we have $$dist(f_i,\mathbb{F}g_i)=\max\{|f_i(x)|:x\in S_{\mathcal{X}}, g_i(x)=0\},$$ which is clearly equal to $\|f_i|_{\ker(g_i)}\|.$ Now, the result immediately follows from Theorem \ref{th-01}.\\
	
	(ii) Suppose $T\perp_B \mathbb{F}^dS.$ Then from the last corollary, we have $f_i\perp_B g_i$ for some $1\leq i\leq d$ and $\|T\|=\|f_i\|.$ Since $\mathcal{X}$ is reflexive and strictly convex, so $\mathcal{X}^*$ is smooth. Therefore, $f_i$ is smooth and so $M_{f_i}=\{\alpha x:\alpha \in \mathbb{F}, |\alpha|=1\},$ for some $x\in S_{\mathcal{X}}.$ Now from \cite[Th. 2.13]{SPM2}, it follows that $f_i(x)g_i(x)=0.$ Since $x\in M_{f_i}$ and $\|f_i\|=\|T\|\neq 0,$ we must have $f_i(x)\neq 0,$ that is $g_i(x)=0.$\\
	
	(iii) Since for all $1\leq i\leq d,$ $\|f_i\|=\lambda,$ so $\|T\|=\|f_i\|.$ Hence,  from Corollary \ref{cor-04}, it follows that $T\perp_B S$ if and only if $f_i\perp_B g_i$ for some $i.$ Now as in (ii), this is equivalent to $g_i(x)=0,$ where $x\in M_{f_i}.$
\end{proof}

We end the section with the relation between the G\^ateaux derivatives of tuples of operators and the same of its components.
\begin{theorem}\label{th-06}
	Suppose $\mathcal{T},\mathcal{S}\in \mathcal{L}(\mathcal{X},\ell_\infty^d(\mathcal{Y}_k)).$ Then 
	\begin{align*}
		\rho_+(\mathcal{T},\mathcal{S})&=\max\{\rho_+(T_i,S_i):1\leq i\leq d, \|T_i\|=\|\mathcal{T}\|\},\\
		\rho_-(\mathcal{T},\mathcal{S})&=\min\{\rho_-(T_i,S_i):1\leq i\leq d, \|T_i\|=\|\mathcal{T}\|\}.
	\end{align*}	
\end{theorem}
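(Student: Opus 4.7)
The plan is to exploit the identity $\|\mathcal{T}+t\mathcal{S}\|=\max_{1\le i\le d}\|T_i+tS_i\|$, which follows from Theorem \ref{th-01} applied to the tuple $\mathcal{T}+t\mathcal{S}$, and then localise the maximum to the active indices $I:=\{i:\|T_i\|=\|\mathcal{T}\|\}$. Observe that $I\ne\emptyset$ by Theorem \ref{th-01}. I first argue that for all sufficiently small $|t|$, the max on the right is attained on $I$. Indeed, let $K=\max_i\|S_i\|$ and $M=\max_{i\notin I}\|T_i\|$ (taking $M=-\infty$ if $I$ covers everything). For $i\in I$ one has $\|T_i+tS_i\|\ge \|\mathcal{T}\|-|t|K$, while for $i\notin I$, $\|T_i+tS_i\|\le M+|t|K$. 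So choosing $|t|<\delta:=(\|\mathcal{T}\|-M)/(2K)$ (and $\delta=\infty$ if $K=0$ or $M=-\infty$) gives $\max_{j\notin I}\|T_j+tS_j\|<\max_{i\in I}\|T_i+tS_i\|$, hence $\|\mathcal{T}+t\mathcal{S}\|=\max_{i\in I}\|T_i+tS_i\|$ for $|t|<\delta$.

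For $0<t<\delta$, since $\|T_i\|=\|\mathcal{T}\|$ for every $i\in I$, I can pull the constant $\|\mathcal{T}\|$ inside the max:
\[
\frac{\|\mathcal{T}+t\mathcal{S}\|-\|\mathcal{T}\|}{t}=\max_{i\in I}\frac{\|T_i+tS_i\|-\|T_i\|}{t}.
\]
Each difference quotient on the right tends to $\rho_+(T_i,S_i)$ as $t\to 0+$, and for a finite collection of functions converging pointwise to limits, the maximum converges to the maximum of the limits. This yields $\rho_+(\mathcal{T},\mathcal{S})=\max_{i\in I}\rho_+(T_i,S_i)$, which is the first identity.

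For the left derivative I use the same equality but note that dividing by $t<0$ reverses the order:
\[
\frac{\|\mathcal{T}+t\mathcal{S}\|-\|\mathcal{T}\|}{t}=\frac{\max_{i\in I}\bigl(\|T_i+tS_i\|-\|T_i\|\bigr)}{t}=\min_{i\in I}\frac{\|T_i+tS_i\|-\|T_i\|}{t}.
\]
Passing to the limit $t\to 0-$ and using the same finite-min/limit interchange as above yields $\rho_-(\mathcal{T},\mathcal{S})=\min_{i\in I}\rho_-(T_i,S_i)$.

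The only genuine point that requires care is the localisation of the outer max to $I$ on a deleted neighbourhood of $0$; once that is in place, the rest is the elementary fact that for finitely many functions the limit of the max (respectively min) of their difference quotients equals the max (respectively min) of the one-sided derivatives.
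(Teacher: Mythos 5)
Your argument is correct, and it shares with the paper the essential starting point, namely the identity $\|\mathcal{T}+t\mathcal{S}\|=\max_{1\le i\le d}\|T_i+tS_i\|$ coming from Theorem \ref{th-01}; but the way you control which index realises the maximum is genuinely different. The paper proves the two inequalities separately: the lower bound $\rho_+(\mathcal{T},\mathcal{S})\ge\rho_+(T_i,S_i)$ for each active index follows from $\|\mathcal{T}+t\mathcal{S}\|\ge\|T_i+tS_i\|$, and for the upper bound it evaluates the limit along $t=1/n$, uses pigeonhole to pass to a subsequence along which the maximizing index $i(n_m)=j$ is constant, and then checks that $\|T_j\|=\|\mathcal{T}\|$ in the limit, so that $\rho_+(\mathcal{T},\mathcal{S})=\rho_+(T_j,S_j)$; the case of $\rho_-$ is left as ``similar.'' You instead prove a uniform localisation: with $I=\{i:\|T_i\|=\|\mathcal{T}\|\}$, $M=\max_{i\notin I}\|T_i\|<\|\mathcal{T}\|$ and $K=\max_i\|S_i\|$, the elementary estimates $\|T_i+tS_i\|\ge\|\mathcal{T}\|-|t|K$ ($i\in I$) and $\|T_j+tS_j\|\le M+|t|K$ ($j\notin I$) show that for $|t|<(\|\mathcal{T}\|-M)/(2K)$ the outer maximum is attained on $I$, which gives the exact identities $\frac{\|\mathcal{T}+t\mathcal{S}\|-\|\mathcal{T}\|}{t}=\max_{i\in I}\frac{\|T_i+tS_i\|-\|T_i\|}{t}$ for $t>0$ and $=\min_{i\in I}\frac{\|T_i+tS_i\|-\|T_i\|}{t}$ for $t<0$ (division by a negative $t$ turning max into min), after which the limit passes through a finite max or min. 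What your route buys is a single quantitative argument, with no subsequence extraction, that treats $\rho_+$ and $\rho_-$ simultaneously and makes transparent why only the active indices contribute; what the paper's route buys is that it never needs the explicit $\delta$-estimate, only the soft fact that some index recurs infinitely often, at the cost of arguing the two bounds separately and repeating the scheme for $\rho_-$. Both arguments are complete and rest on the same Theorem \ref{th-01}.
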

\begin{proof}
Note that, 
\begin{eqnarray*}
	\rho_+(\mathcal{T},\mathcal{S})&=& \lim_{t\to 0+}\frac{\|\mathcal{T}+t\mathcal{S}\|-\|\mathcal{T}\|}{t}\\
	&=& \lim_{t\to 0+}\frac{\|\mathcal{T}+t\mathcal{S}\|-\|T_i\|}{t}, \quad \text{ where }\|\mathcal{T}\|=\|T_i\|\\
	&\geq& \lim_{t\to 0+}\frac{\|T_i+tS_i\|-\|T_i\|}{t}\\
	&=&\rho_+(T_i,S_i)\\
	\Rightarrow 	\rho_+(\mathcal{T},\mathcal{S})&\geq& \max\{\rho_+(T_i,S_i):1\leq i\leq d, \|T_i\|=\|\mathcal{T}\|\}.
	\end{eqnarray*}
For the reverse inequality, by Theorem \ref{th-01}, assume that $$\|\mathcal{T}+\frac{1}{n}\mathcal{S}\|=\|T_{i(n)}+\frac{1}{n}S_{i(n)}\|,\quad\text{where } i(n)\in \{1,2,\ldots,d\}.$$ There exits a subsequence, say $\{n_m\}$ such that $i(n_m)$ is constant. Assume $i(n_m)=j$ for all $m\in \mathbb{N},$ that is, $\|\mathcal{T}+\frac{1}{n_m}\mathcal{S}\|=\|T_j+\frac{1}{n_m}S_j\|.$ Taking limit $m\to \infty,$ we get $\|\mathcal{T}\|=\|T_j\|.$ Therefore,
\begin{eqnarray*}
\rho_+(\mathcal{T},\mathcal{S})&=&\lim_{m\to\infty}	\frac{\|\mathcal{T}+\frac{1}{n_m}\mathcal{S}\|-\|\mathcal{T}\|}{\frac{1}{n_m}}\\
&=&\lim_{m\to\infty}	\frac{\|T_j+\frac{1}{n_m}S_j\|-\|T_j\|}{\frac{1}{n_m}}\\
&=&\rho_+(T_j,S_j)\\
&\leq& \max\{\rho_+(T_i,S_i):1\leq i\leq d, \|T_i\|=\|\mathcal{T}\|\}.
\end{eqnarray*}

	The other part of the proof follows similarly.
\end{proof}

\section{Acknowledgement}
The author would like to thank DST, Govt. of India for the financial support in the form of INSPIRE Faculty Fellowship (DST/INSPIRE/04/2022/001207). The author is thankful to Dr. Krishna Kumar Gupta, Prof. Priyanka Grover, and Dr. Susmita Seal for suggesting an alternative proof of Theorem \ref{th-01}.

\bibliographystyle{amsplain}

\end{document}